\documentclass[english,twoside]{article}
\usepackage[T1]{fontenc}
\usepackage[utf8]{inputenc}
\usepackage[english]{babel}
\usepackage[tmargin=2.5cm,bmargin=2.5cm,lmargin=3cm,rmargin=2.5cm]{geometry}
\setlength{\parskip}{\medskipamount}
\setlength{\parindent}{0pt}
\usepackage{
  amsthm,amsmath,amssymb,esint,graphicx,color,
  hyperref,placeins,caption,cleveref,authblk
}

\newtheorem{theorem}{Theorem}

\newtheorem{lemma}[theorem]{Lemma}

\newcommand{\mat}[1]{\boldsymbol{#1}}
\newcommand{\dd}{\; d}
\newcommand{\abs}[1]{\left|#1\right|}
\newcommand{\norm}[1]{\left|\!\left|#1\right|\!\right|}
\newcommand{\restr}[2]{{
    \left.\kern-\nulldelimiterspace 
      #1 
      \vphantom{\big|} 
    \right|_{#2} 
  }}
\newcommand{\grad}{\nabla}
\renewcommand{\div}{\nabla\cdot}
\newcommand{\ip}[2]{\left(#1,#2\right)}
\newcommand{\pair}[2]{\left\langle #1,#2\right\rangle}
\renewcommand{\hat}[1]{\widehat{#1}}
\renewcommand{\tilde}[1]{\widetilde{#1}}

\usepackage[
backend=biber,
style=numeric,
natbib=true,
url=false,
doi=true,
eprint=false,
sortcites=true,
maxbibnames=99
]{biblatex}
\AtEveryBibitem{%
  \clearfield{note}%
}
\addbibresource{bibtex/papers.bib}

\author[1,2]{Mark Ainsworth}
\author[1,3]{Christian Glusa}
\affil[1]{Division of Applied Mathematics, Brown University, 182 George St, Providence, RI 02912, USA}
\affil[2]{Computer Science and Mathematics Division, Oak Ridge National Laboratory, Oak Ridge, TN 37831, USA}
\affil[3]{Center for Computing Research, Sandia National Laboratories, Albuquerque, NM 87185, USA\thanks{Sandia National Laboratories is a multimission laboratory managed and operated by National Technology and Engineering Solutions of Sandia, LLC, a wholly owned subsidiary of Honeywell International, Inc., for the U.S. Department of Energy’s National Nuclear Security Administration under contract DE-NA0003525. \newline
    SAND Number: SAND2017-9468 O}}

\title{Hybrid Finite Element - Spectral Method for the Fractional Laplacian:
  Approximation Theory and Efficient Solver
  \thanks{This work was supported by the MURI/ARO on ``Fractional PDEs for Conservation Laws and Beyond: Theory, Numerics and Applications'' (W911NF-15-1-0562).}
  }

\begin{document}
\maketitle

\abstract{
  A numerical scheme is presented for approximating fractional order Poisson problems in two and three dimensions.
  The scheme is based on reformulating the original problem posed over \(\Omega\) on the extruded domain \(\mathcal{C}=\Omega\times[0,\infty)\) following \cite{CaffarelliSilvestre2007_ExtensionProblemRelatedToFractionalLaplacian}.
  The resulting degenerate elliptic \emph{integer} order PDE is then approximated using a hybrid FEM-spectral scheme.
  Finite elements are used in the direction parallel to the problem domain \(\Omega\), and an appropriate spectral method is used in the extruded direction.
  The spectral part of the scheme requires that we approximate the true eigenvalues of the integer order Laplacian over \(\Omega\).
  We derive an a priori error estimate which takes account of the error arising from using an approximation in place  of the true eigenvalues.
  We further present a strategy for choosing approximations of the eigenvalues based on Weyl's law and finite element discretizations of the eigenvalue problem.
  The system of linear algebraic equations arising from the hybrid FEM-spectral scheme is decomposed into blocks which can be solved effectively using standard iterative solvers such as multigrid and conjugate gradient.
  Numerical examples in two and three dimensions show that the approach is quasi-optimal in terms of complexity.
}

\section{Introduction}
\label{sec:introduction}

Over the last few years, \emph{non-local} and \emph{fractional order} models models have seen a surge in interest in a wide variety of application areas such as anomalous diffusion, material science, image processing, finance and electromagnetic fluids \cite{West2016_FractionalCalculusViewComplexity}.
Compared with local, integer order equations, the linear algebraic systems arising from fractional order models are generally dense, and can be difficult to solve efficiently.
In the present work, we explore how structural sparsity can be leveraged to solve a fractional order Poisson problem in quasi-optimal complexity.

Let \(\Omega\in C^{2}\) or a convex polyhedron in \(\mathbb{R}^{d}\).
One of the many possible ways of defining a fractional order Laplacian on \(\Omega\) uses the spectral information of the integer order operator.
Let \(0<\lambda_{0}\leq\lambda_{1}\leq\dots\) and \(\phi_{0},\phi_{1},\dots\) be the eigenvalues and eigenfunctions of the regular Laplacian
\begin{align}
  \left\{
  \begin{array}{rlrl}
    -\Delta\phi_{m}\left(\vec{x}\right)&=\lambda_{m}\phi_{m}\left(\vec{x}\right), &&\vec{x}\in\Omega, \\
    \phi_{m}\left(\vec{x}\right) &=0, && \vec{x}\in\partial\Omega,
  \end{array}\right. \tag{Eig}\label{eq:Eig}
\end{align}
normalised so that \(\norm{\phi_{m}}_{L^{2}}=1\).
The eigenfunctions \(\left\{\phi_{m}\right\}_{m=0}^{\infty}\) form a complete orthonormal basis of \(L^{2}\left(\Omega\right)\).
This means that any function \(u\in L^{2}\left(\Omega\right)\) can be expanded as
\begin{align}
  u=\sum_{m=0}^{\infty}u_{m}\phi_{m} \qquad\text{with } u_{m}= \ip{u}{\phi_{m}}_{L^{2}}. \label{eq:orthoExpansion}
\end{align}
In particular,
\begin{align*}
  \left(-\Delta\right) u\left(\vec{x}\right)&= \sum_{m=0}^{\infty}u_{m}\lambda_{m}\phi_{m}\left(\vec{x}\right),
\end{align*}
while the \emph{spectral fractional Laplacian} of order \(s\in(0,1)\) is given by
\begin{align*}
  \left(-\Delta\right)^{s}u\left(\vec{x}\right)
  &= \sum_{m=0}^{\infty} u_{m}\lambda_{m}^{s} \phi_{m}\left(\vec{x}\right).
\end{align*}
As \(s\rightarrow 0\), the identity is recovered, whereas the usual, integer order Laplacian is recovered as \(s\rightarrow1\).

We are interested in solving the fractional order Poisson problem
\begin{align}
  \left\{
  \begin{array}{rlrl}
    \left(-\Delta\right)^{s}u\left(\vec{x}\right) &= f\left(\vec{x}\right), && \vec{x}\in\Omega,\\
    u(\vec{x})&=0, && \vec{x}\in\partial\Omega
  \end{array}\right. \tag{fP}\label{eq:fracPoisson}
\end{align}
with given right-hand side \(f\).

The spectral definition is not the only possibility to define a fractional order Laplacian on \(\Omega\); other choices include the so-called \emph{integral fractional Laplacian}, defined as
\begin{align*}
  \left(-\Delta\right)_{I}^{s} u\left(\vec{x}\right) = C(d,s) \operatorname{p.v.} \int_{\mathbb{R}^{d}} \dd \vec{y} ~ \frac{u(\vec{x})-u(\vec{y})}{\abs{\vec{x}-\vec{y}}^{d+2s}}
\end{align*}
where
\begin{align*}
  C(d,s) = \frac{2^{2s}s\Gamma\left(s+\frac{d}{2}\right)}{\pi^{d/2}\Gamma\left(1-s\right)}
\end{align*}
is a normalisation constant and \(\operatorname{p.v.}\) denotes the Cauchy principal value of the integral \cite[Chapter 5]{Mclean2000_StronglyEllipticSystemsBoundaryIntegralEquations}.
If \(\Omega=\mathbb{R}^{d}\), the two definitions coincide, but they are different for bounded domains \cite{ServadeiValdinoci2014_SpectrumTwoDifferentFractionalOperators}.
In previous work \cite{AinsworthGlusa2017_AspectsAdaptiveFiniteElement,AinsworthGlusa2017_TowardsEfficientFiniteElement}, we demonstrated that adaptive finite elements and multigrid methods can be used to solve fractional equations based on the integral definition in quasi-optimal complexity.

Existing solution methods for the fractional Poisson problem involving the spectral definition of the fractional Laplacian generally follow one of two different paths:
exploit the Dunford-Taylor integral representation of the fractional power of the discretized integral order Laplacian \cite{BonitoPasciak2015_NumericalApproximationFractionalPowersEllipticOperators,BonitoPasciak2016_NumericalApproximationFractionalPowers};
or, interpret the fractional operator as a Dirichlet-to-Neumann map of a singular elliptic problem embedded in \(d+1\) space dimensions - the so-called \emph{extruded problem} approach \cite{CaffarelliSilvestre2007_ExtensionProblemRelatedToFractionalLaplacian,StingaTorrea2010_ExtensionProblemHarnacksInequalityFractionalOperators}.

\citeauthor{NochettoOtarolaEtAl2015_PdeApproachToFractional} \cite{NochettoOtarolaEtAl2015_PdeApproachToFractional} used finite elements to discretize the extruded problem, along with a problem specific multigrid solver \cite{ChenNochettoEtAl2016_MultilevelMethodsNonuniformlyElliptic}, while \cite{MeidnerPfeffererEtAl2017_HpFiniteElementsFractionalDiffusion} is based on a \(hp\)-FEM discretization.

In this work, we also approximate the extruded problem.
However, while we use finite elements in the direction parallel to the problem domain, we introduce a spectral method in the extruded direction.
By a careful choice of expansion functions in the spectral method, we recover a quasi-optimal method.

This work is structured as follows:
In \Cref{sec:notation}, we introduce the necessary notation as well as the extruded problem associated with the spectral fractional Laplacian.
We briefly discuss the eigenfunctions of the extruded problem in \Cref{sec:eigenf-extend-probl}, which are then used to discretize the extruded problem in \Cref{sec:probl-discr}.
In \Cref{sec:error-bound}, we derive an a priori error bound which is explicit in the mesh size \(h\) on \(\Omega\) and the spectral order on \([0,\infty)\).
The method requires suitable approximation of the true eigenvalues of the standard Laplacian over \(\Omega\).
We describe in \Cref{sec:choice-appr-eigenv} how the approximations can be obtained in an efficient manner.
In \Cref{sec:solution-linear-system}, we give details on the solution of the resulting linear systems using a multigrid solver.
Finally, in \Cref{sec:numerical-examples}, numerical results are presented that confirm quasi-optimal complexity of our algorithm.

\section{Notation}
\label{sec:notation}

Let \(\Omega\) be a subdomain of \(\mathbb{R}^{d}\) as above, then \cite{Mclean2000_StronglyEllipticSystemsBoundaryIntegralEquations} we define the Sobolev space \(H^{s}\left(\Omega\right)\) to be
\begin{align*}
   H^{s}\left(\Omega\right)&:=\left\{u\in L^{2}\left(\Omega\right) \mid \norm{u}_{H^{s}\left(\Omega\right)} < \infty\right\},
\end{align*}
equipped with the norm
\begin{align*}
  \norm{u}_{H^{s}\left(\Omega\right)}^{2}&= \norm{u}_{L^{2}\left(\Omega\right)}^{2} + \int_{\Omega}\dd \vec{x} \int_{\Omega}\dd \vec{y} \frac{\left(u(\vec{x})-u(\vec{y})\right)^{2}}{\abs{\vec{x}-\vec{y}}^{d+2s}}.
\end{align*}
The space \(\tilde{H}^{s}\left(\Omega\right)\) is defined as \cite[Appendix B]{Bramble1993_MultigridMethods}
\begin{align*}
  \tilde{H}^{s}\left(\Omega\right)&=\left\{u\in L^{2}\left(\Omega\right) \mid \abs{u}_{\tilde{H}^{s}}<\infty\right\},
\end{align*}
where the norm is given by
\begin{align*}
  \abs{u}_{\tilde{H}^{s}}^{2}&= \sum_{m=0}^{\infty}u_{m}^{2}\lambda_{m}^{s},
\end{align*}
where \(u_{m}\) are defined in \eqref{eq:orthoExpansion}.
For \(s>1/2\), \(\tilde{H}^{s}\left(\Omega\right)\) coincides with the space \(H_{0}^{s}\left(\Omega\right)\) defined to be the closure of \(C_{0}^{\infty}\left(\Omega\right)\) with respect to the \(H^{s}\left(\Omega\right)\)-norm, whilst for \(s<1/2\), \(\tilde{H}^{s}\left(\Omega\right)\) is identical to \(H^{s}\left(\Omega\right)\).
In the critical case \(s=1/2\), \(\tilde{H}^{s}\left(\Omega\right)\subset H^{s}_{0}\left(\Omega\right)\), and the inclusion is strict.
(See for example \cite[Chapter 3]{Mclean2000_StronglyEllipticSystemsBoundaryIntegralEquations}.)

The spaces \(\tilde{H}^{s}\left(\Omega\right)\) are a useful vehicle to describe the properties of the spectral fractional Laplacians:
For instance, suppose \(f\in\tilde{H}^{r}\left(\Omega\right)\), \(r\geq -s\), and \(f=\sum_{m=0}^{\infty}f_{m}\phi_{m}\left(\vec{x}\right)\) with \(f_{k}=\ip{f}{\phi_{m}}_{L^{2}}\) then the solution \(u\) to the fractional Poisson problem \(\eqref{eq:fracPoisson}\) of order \(s\) with right-hand side \(f\) is given by
\begin{align}
  u&=\sum_{m=0}^{\infty} u_{m}\phi_{m}(\vec{x}), \qquad u_{m}=f_{m}\lambda_{m}^{-s}, \label{eq:orthoExpansionFracLapl}
\end{align}
and hence \(u\in \tilde{H}^{r+2s}\left(\Omega\right)\).
A more detailed regularity theory for spectral Poisson problems can be found in the work of \citeauthor{Grubb2015_RegularitySpectralFractionalDirichletNeumannProblems} \cite{Grubb2015_RegularitySpectralFractionalDirichletNeumannProblems}.

We also define the weighted norms on a generic domain \(\mathcal{D}\) for a non-negative weight function \(\omega\) by
\begin{align*}
  \norm{u}_{L^{2}_{\omega}}^{2} &= \int_{\mathcal{D}} \omega \abs{u}^{2}, &
  \abs{u}_{H^{1}_{\omega}}^{2} &= \int_{\mathcal{D}} \omega \abs{\grad u}^{2}, \\
  \norm{u}_{H^{1}_{\omega}}^{2} &= \norm{u}_{L^{2}_{\omega}}^{2}+\abs{u}_{H^{1}_{\omega}}^{2},
\end{align*}
along with the associated weighted spaces
\begin{align*}
  L^{2}_{\omega}\left(\mathcal{D}\right) &= \left\{u \text{ measurable } \mid \norm{u}_{L^{2}_{\omega}}<\infty\right\}, &
  H^{1}_{\omega}\left(\mathcal{D}\right)&= \left\{u\in L^{2}_{\omega}\left(\mathcal{D}\right) \mid \norm{u}_{H^{1}_{\omega}}<\infty\right\}.
\end{align*}

Building on the work of \citeauthor{CaffarelliSilvestre2007_ExtensionProblemRelatedToFractionalLaplacian} \cite{CaffarelliSilvestre2007_ExtensionProblemRelatedToFractionalLaplacian}, \citeauthor{StingaTorrea2010_ExtensionProblemHarnacksInequalityFractionalOperators} \cite{StingaTorrea2010_ExtensionProblemHarnacksInequalityFractionalOperators} showed that the fractional Poisson problem \(\eqref{eq:fracPoisson}\) can be recast as a problem over the extruded domain \(\mathcal{C}=\Omega\times[0,\infty)\):
\begin{align}
  \left\{
  \begin{array}{rlrl}
    -\div y^{\alpha} \grad U\left(\vec{x},y\right) &= 0, && \left(\vec{x},y\right)\in\mathcal{C}, \\
    U\left(\vec{x},y\right) &= 0, && \left(\vec{x},y\right)\in\partial_{L}\mathcal{C} := \partial\Omega\times[0,\infty), \\
    \frac{\partial U}{\partial \nu^{\alpha}}\left(\vec{x}\right) &= d_{s}f\left(\vec{x}\right), && \vec{x}\in \Omega,
  \end{array} \right. \tag{Ext}\label{eq:extendedProblem}
\end{align}
where \(\alpha = 1-2s\), \(d_{s} = 2^{1-2s}\frac{\Gamma\left(1-s\right)}{\Gamma\left(s\right)}\), and
\begin{align*}
  \frac{\partial U}{\partial \nu^{\alpha}}\left(\vec{x}\right)&= -\lim_{y\rightarrow 0^{+}}y^{\alpha}\frac{\partial U}{\partial y}\left(\vec{x},y\right),
\end{align*}
with the solution to \(\eqref{eq:fracPoisson}\) recovered by taking the trace of \(U\) on \(\Omega\), i.e. \(u=\operatorname{tr}_{\Omega}U\).

We define the solution space \(\mathcal{H}^{1}_{\alpha}\left(\mathcal{C}\right)\) on the semi-infinite cylinder \(\mathcal{C}\) as
\begin{align*}
  \mathcal{H}_{\alpha}^{1}\left(\mathcal{C}\right)&=\left\{V\in H^{1}_{y^{\alpha}}\left(\mathcal{C}\right) \mid V=0 \text{ on } \partial_{L}\mathcal{C}\right\},
\end{align*}
with norm \(\norm{V}_{\mathcal{H}^{1}_{\alpha}}= \abs{V}_{H^{1}_{y^{\alpha}}}\).
The weak formulation of the extruded problem \(\eqref{eq:extendedProblem}\) consists of seeking \(U\in\mathcal{H}_{\alpha}^{1}\left(\mathcal{C}\right)\) such that:
\begin{align}
  \int_{{\cal C}}y^{\alpha}\grad U\cdot \grad V &= d_{s}\pair{f}{\operatorname{tr}_{\Omega}V} \quad \forall V \in\mathcal{H}_{\alpha}^{1}\left(\mathcal{C}\right). \label{eq:var}
\end{align}
Using a trace inequality \cite{NochettoOtarolaEtAl2015_PdeApproachToFractional}, the Lax-Milgram Lemma shows that the extruded problem is well-posed.

\section{Eigenfunctions of the Extruded Problem}
\label{sec:eigenf-extend-probl}

We seek a solution of the extruded problem using classical separation of variables: \(U\left(\vec{x},y\right)=\Phi\left(\vec{x}\right) \Psi\left(y\right)\).
Then
\begin{align*}
  \frac{-\Delta_{\vec{x}}\Phi}{\Phi}=\frac{\partial_{y}y^{\alpha}\partial_{y}\Psi}{y^{\alpha}\Psi}=A,
\end{align*}
where \(A\) is a constant that is independent of \(\vec{x}\) and \(y\).
The boundary condition on the lateral face of the cylinder \(\mathcal{C}\), shows that \(\Phi=\phi_{m}\) and \(A=\lambda_{m}\) for \(m\in\mathbb{N}\) thanks to \(\eqref{eq:Eig}\).
The associated function \(\Psi\) in the extruded direction must therefore satisfy
\begin{align}
  \partial_{y}y^{\alpha}\partial_{y}\Psi=\lambda_{m}y^{\alpha}\Psi, \label{eq:extendedODE}
\end{align}
or, equivalently,
\begin{align*}
  \partial_{y}^{2}\Psi + \frac{\alpha}{y}\partial_{y}\Psi - \lambda_{m}\Psi=0.
\end{align*}
Choosing the normalisation \(\Psi\left(0\right)=1\) gives
\begin{align}
  \Psi\left(y\right)=\psi_{m}\left(y\right)
  &:= c_{s}\left(\lambda_{m}^{1/2}y\right)^{s} K_{s}\left(\lambda_{m}^{1/2}y\right),   \label{eq:spectralBasis}
\end{align}
where \(c_{s}= 2^{1-s} / \Gamma\left(s\right)\).
Moreover
\begin{align*}
  \frac{\partial \psi_{m}}{\partial \nu^{\alpha}}= d_{s}\lambda_{m}^{s},
\end{align*}
so that
\begin{align}
  \int_{0}^{\infty}y^{\alpha} \psi_{m}\psi_{n}
  &=
    \begin{cases}
      d_{s}\frac{\lambda_{m}^{s}-\lambda_{n}^{s}}{\lambda_{m}-\lambda_{n}} & \text{if } m\neq n,\\
      sd_{s}\lambda_{m}^{s-1} & \text{if } m=n,
    \end{cases} \label{eq:massSpec}
                                \intertext{and}
  \int_{0}^{\infty}y^{\alpha} \psi_{m}'\psi_{n}'
  &=
    \begin{cases}
      d_{s}\frac{\lambda_{m}\lambda_{n}^{s}-\lambda_{n}\lambda_{m}^{s}}{\lambda_{m}-\lambda_{n}} & \text{if } m\neq n,\\
      (1-s)d_{s}\lambda_{m}^{s} & \text{if } m=n.
    \end{cases}\label{eq:stiffnesSpec}
\end{align}
The solution to the extruded problem \(\eqref{eq:extendedProblem}\) is then given by
\begin{align}
  U\left(\vec{x},y\right)&=\sum_{m=0}^{\infty} u_{m}\phi_{m}(\vec{x})\psi_{m}\left(y\right) \quad \text{where } u_{m}=\lambda_{m}^{-s}f_{m}, \label{eq:separableSol}
\end{align}
whilst \(u\left(\vec{x}\right)=\sum_{m=0}^{\infty}u_{m}\phi_{m}\left(\vec{x}\right)\) as in \eqref{eq:orthoExpansionFracLapl}.
The separable solution \eqref{eq:separableSol} forms the basis for our choice of discretization of the extruded problem to be described in the next section.
The chief advantage of this approach is that the extruded problem involves only integer order derivatives but come as the price of having to deal with a degenerate weight \(y^{\alpha}\).

\section{Discretization of the Extruded Problem}
\label{sec:probl-discr}

We propose to approximate the variational problem \eqref{eq:var} using a Galerkin scheme with the subspace consisting of standard low order nodal finite elements of order \(k\) in the \(\vec{x}\)-variable and a spectral method in the \(y\)-direction.
To this end, we let \(\mathcal{T}_{h}\) be a shape regular, globally quasi-uniform triangulation of \(\Omega\), and let
\begin{align*}
  V_{h}=\left\{u_{h}\in H^{1}_{0}\left(\Omega\right) \mid \restr{u_{h}}{K}\in \mathbb{P}_{k}\left(K\right) ~\forall K\in\mathcal{T}_{h}\right\}.
\end{align*}
Ideally, we would like to use \(y\)-basis functions given by \eqref{eq:spectralBasis}.
Unfortunately, this would require knowledge of the true eigenvalues of the integer order Laplacian over \(\Omega\).
Instead, for a given spectral expansion order \(M\), we use an approximation \(\tilde{\lambda}_{m} \approx \lambda_{m}\) in place of the true eigenvalues in \eqref{eq:spectralBasis}:
\begin{align}
  \tilde{\psi}_{m}\left(y\right)
  &:= c_{s}\left(\tilde{\lambda}_{m}^{1/2}y\right)^{s} K_{s}\left(\tilde{\lambda}_{m}^{1/2}y\right).
\end{align}
The Galerkin subspace for the extruded problem is then taken to be
\begin{align*}
  \mathcal{V}_{h,M}=\left\{U_{h,M}=\sum_{m=0}^{\tilde{M}-1}u_{h,m}\left(\vec{x}\right)\tilde{\psi}_{m}\left(y\right) \mid u_{h,m}\in V_{h} \right\}\subset \mathcal{H}_{\alpha}^{1}\left(\mathcal{C}\right).
\end{align*}
The selection of the approximate eigenvalues is discussed in \Cref{sec:choice-appr-eigenv}.
In particular, if two or more of the approximate eigenvalues are ``close'' then we retain only a single eigenvalue, thereby reducing the dimension of \(\mathcal{V}_{h,M}\) to \(\mathcal{N}:=\dim \mathcal{V}_{h,M}=n\times \tilde{M}\), where \(n:=\dim V_{h}\) and \(\tilde{M}\leq M\) is the number of distinct approximate eigenvalues.
We return to this point in \Cref{sec:size-reduct-appr}.
In the analysis, it will be useful to consider the semi-discrete space
\begin{align*}
  \mathcal{V}_{M}=\left\{U_{M}=\sum_{m=0}^{M-1}u_{m}\left(\vec{x}\right)\tilde{\psi}_{m}\left(y\right) \mid u_{m}\in H^{1}_{0}\left(\Omega\right) \right\} \subset \mathcal{H}_{\alpha}^{1}\left(\mathcal{C}\right).
\end{align*}

The Galerkin approximation consists of seeking \(U_{h,M} \in\mathcal{V}_{h,M}\) such that
\begin{align}
  \int_{{\cal C}}y^{\alpha}\grad U_{h,M}\cdot \grad V &= d_{s}\pair{f}{\operatorname{tr}_{\Omega}V} \quad \forall V \in\mathcal{V}_{h,M}, \label{eq:varSubspace}
\end{align}
with the approximation of the fractional Poisson problem given by
\begin{align*}
  u_{h,M}&:=\operatorname{tr}_{\Omega}U_{h,M}.
\end{align*}
We wish to obtain an estimate for the error \(u-u_{h,M}\) in this approximation.
The trace inequality \cite{NochettoOtarolaEtAl2015_PdeApproachToFractional} implies that
\begin{align*}
  \norm{u-u_{h,M}}_{\tilde{H}^{s}} &\leq C \norm{U-U_{h,M}}_{\mathcal{H}^{1}_{\alpha}},
\end{align*}
where the constant is independent of \(k\), \(M\) and \(h\).
Hence, in order to bound \(u-u_{h,M}\), it suffices to bound the term on the right-hand side - the discretization error of the extruded problem \eqref{eq:varSubspace}.

\section{A Priori Error Estimate}
\label{sec:error-bound}

We first consider the error in the approximation given by the semi-discrete Galerkin scheme on the space \(\mathcal{V}_{M}\).
The following result shows how the error depends on \(M\) and on how well the approximate eigenvalues \(\left\{\tilde{\lambda}_{m}\right\}\) match the true eigenvalues \(\left\{\lambda_{m}\right\}\).

\begin{lemma}\label{lem:semianalyticBound}
  Let \(M\in\mathbb{N}\) and \(U\in\mathcal{H}^{1}_{\alpha}\left(\mathcal{C}\right)\) be the solution of the extruded problem.
  Then
  \begin{align*}
    \inf_{V_{M}\in\mathcal{V}_{M}}\norm{U-V_{M}}_{\mathcal{H}^{1}_{\alpha}}^{2}&= d_{s}\sum_{m=0}^{\infty}\beta_{m}u_{m}^{2}\lambda_{m}^{s},
                                                   \intertext{where}
    \beta_{m}&=
    \begin{cases}
      g\left(s, \tilde{\lambda}_{m} / \lambda_{m}\right) & m=0,\dots,M-1, \\
      1 & m\geq M,
    \end{cases}
          \intertext{and}
          g\left(s,  \rho\right)&=1-\frac{1}{(1-s)\rho^{s} + s \rho^{s-1}}.
  \end{align*}
\end{lemma}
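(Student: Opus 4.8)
The plan is to exploit the $y^\alpha$-orthogonality structure as much as possible. First I would expand the true solution $U$ in the basis $\phi_m(\vec x)\psi_m(y)$ as in \eqref{eq:separableSol} and observe that the best approximation in $\mathcal{V}_M$ decouples across the $\vec x$-modes. Indeed, since $\{\phi_m\}$ is $L^2(\Omega)$-orthonormal and the energy inner product on $\mathcal{C}$ is $\int_{\mathcal C} y^\alpha \grad\cdot\grad$, a function $V_M=\sum_{m=0}^{M-1} v_m(\vec x)\tilde\psi_m(y)$ interacts with the mode $\phi_m(\vec x)\psi_m(y)$ of $U$ only through its $\phi_m$-component of $v_m$; the off-diagonal terms $\int_\Omega \phi_j\phi_m\,\dd\vec x$ vanish for $j\ne m$, and for $m\ge M$ there is simply nothing in $\mathcal{V}_M$ to approximate $\phi_m\psi_m$, contributing $\norm{u_m\phi_m\psi_m}_{\mathcal H^1_\alpha}^2$ to the infimum. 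This immediately isolates, for each $m<M$, a one-dimensional minimization over the single coefficient multiplying $\phi_m(\vec x)\tilde\psi_m(y)$, so that the whole infimum splits as a sum over $m$.

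Next I would evaluate the $m\ge M$ tail. Using that $-\Delta\phi_m=\lambda_m\phi_m$ and that $\psi_m$ solves \eqref{eq:extendedODE}, one gets $\norm{\phi_m\psi_m}_{\mathcal H^1_\alpha}^2 = \int_0^\infty y^\alpha (\psi_m')^2 + \lambda_m\int_0^\infty y^\alpha\psi_m^2$, which by \eqref{eq:massSpec} and \eqref{eq:stiffnesSpec} with $m=n$ equals $(1-s)d_s\lambda_m^s + \lambda_m\cdot s d_s\lambda_m^{s-1} = d_s\lambda_m^s$. Hence the tail contributes $d_s\sum_{m\ge M} u_m^2\lambda_m^s$, matching $\beta_m=1$ there.

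For $m<M$, I need the one-dimensional minimum: minimize over $t\in\mathbb R$ the quantity $\norm{u_m\phi_m\psi_m - t\,\phi_m\tilde\psi_m}_{\mathcal H^1_\alpha}^2$. This is a quadratic in $t$ whose coefficients are the $y^\alpha$-weighted inner products of $\psi_m,\tilde\psi_m$ and their derivatives over $[0,\infty)$, plus $\lambda_m$ times the corresponding $L^2_{y^\alpha}$ inner products (the $\lambda_m$ coming from integrating $\grad_{\vec x}$ against $\grad_{\vec x}$ and using the eigenvalue relation). The minimizing value is $a^2(1 - b^2/(ac))$ in the usual notation, where $a=\norm{\phi_m\psi_m}_{\mathcal H^1_\alpha}^2 = d_s u_m^2\lambda_m^s$ (after restoring $u_m$), $c=\norm{\phi_m\tilde\psi_m}_{\mathcal H^1_\alpha}^2$, and $b$ is the cross term. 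The key computation is therefore to evaluate $b$ and $c$ explicitly. For $c$ this is again \eqref{eq:massSpec}--\eqref{eq:stiffnesSpec} but with $\tilde\lambda_m$ in place of $\lambda_m$ in the Bessel functions — careful: $\tilde\psi_m$ satisfies the ODE with parameter $\tilde\lambda_m$, not $\lambda_m$, so $c = \int_0^\infty y^\alpha(\tilde\psi_m')^2 + \lambda_m\int_0^\infty y^\alpha\tilde\psi_m^2$ mixes the two scales. The cleanest route is to rescale $z=\tilde\lambda_m^{1/2}y$ and use the identities already derived, or to use the ODE and integration by parts: $\int_0^\infty y^\alpha(\tilde\psi_m')^2 = -\int_0^\infty \tilde\psi_m\,\partial_y(y^\alpha\partial_y\tilde\psi_m) + [\text{boundary}] = \tilde\lambda_m\int_0^\infty y^\alpha\tilde\psi_m^2 + d_s\tilde\lambda_m^s$ (the boundary term at $y=0$ giving $\partial\tilde\psi_m/\partial\nu^\alpha = d_s\tilde\lambda_m^s$ and $\tilde\psi_m(0)=1$). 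Combined with the self-overlap $\int_0^\infty y^\alpha\tilde\psi_m^2 = s d_s\tilde\lambda_m^{s-1}$, one obtains $c = d_s\tilde\lambda_m^s + \lambda_m s d_s\tilde\lambda_m^{s-1} = d_s\tilde\lambda_m^{s-1}(\tilde\lambda_m + s\lambda_m)$, or after dividing by a common factor, proportional to $(1-s)\rho^s + s\rho^{s-1}$ with $\rho = \tilde\lambda_m/\lambda_m$ once normalized against $d_s\lambda_m^s$ — here I would double-check the algebra. Similarly $b = \int_0^\infty y^\alpha \psi_m'\tilde\psi_m' + \lambda_m\int_0^\infty y^\alpha\psi_m\tilde\psi_m$, and integration by parts against either ODE shows $b = d_s\lambda_m^s$ as well (the cross terms telescope because the boundary data $\partial\psi_m/\partial\nu^\alpha = d_s\lambda_m^s$ and $\tilde\psi_m(0)=1$, symmetrically). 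Then $b^2/(ac)$ collapses to $1/((1-s)\rho^s + s\rho^{s-1})$, giving exactly $\beta_m = g(s,\rho)$.

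The main obstacle I anticipate is bookkeeping in the cross-overlap integral $\int_0^\infty y^\alpha \psi_m \tilde\psi_m$ and its derivative analogue: unlike \eqref{eq:massSpec}, these involve Bessel functions $K_s$ at two different arguments $\lambda_m^{1/2}y$ and $\tilde\lambda_m^{1/2}y$, so the slick "derivative of the $m\ne n$ formula" trick does not directly apply. The resolution is to use the two ODEs \eqref{eq:extendedODE} (one for $\psi_m$ with $\lambda_m$, one for $\tilde\psi_m$ with $\tilde\lambda_m$), multiply crosswise, subtract, and integrate — a Lagrange-identity / Wronskian argument — which expresses $\int_0^\infty y^\alpha\psi_m\tilde\psi_m$ purely in terms of the boundary conormal data $d_s\lambda_m^s$, $d_s\tilde\lambda_m^s$ and the normalizations $\psi_m(0)=\tilde\psi_m(0)=1$, with no residual Bessel evaluation. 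Once that identity is in hand, assembling $a,b,c$ and simplifying the quadratic minimum to $g(s,\rho)$ is routine, and summing over $m$ together with the tail yields the claimed formula $\inf_{V_M}\norm{U-V_M}_{\mathcal H^1_\alpha}^2 = d_s\sum_{m=0}^\infty \beta_m u_m^2\lambda_m^s$.
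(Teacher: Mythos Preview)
Your proposal is correct and follows essentially the same route as the paper: restrict to the diagonal ansatz $V_M=\sum_{m<M}\alpha_m u_m\phi_m\tilde\psi_m$, decouple via the $L^2$- and $H^1$-orthogonality of the $\phi_m$ into a one-parameter minimization per mode, and evaluate the minimum (the paper via $\|\psi_m\|_m^2\sin^2\theta_m$, you via $a-b^2/c$; your integration-by-parts identity $b=(\psi_m,\tilde\psi_m)_m=d_s\lambda_m^s$ is exactly what makes both collapse to $g(s,\tilde\lambda_m/\lambda_m)$).

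Two minor caveats. First, your decoupling claim is slightly too strong: in $\mathcal{V}_M$ the $\vec x$-mode $\phi_j$ of $U$ can be paired with \emph{every} $\tilde\psi_m$, $m=0,\dots,M-1$, so the genuine infimum is an $M$-dimensional projection in the $y$-variable for each $j$, not a one-dimensional one. The paper, like you, simply works with the diagonal ansatz and thus really establishes the displayed expression as an upper bound on the infimum, which is all that is used in \Cref{thm:errorBound}. Second, the ``main obstacle'' you anticipate never materializes: your own computation of $b$ already shows that integrating $\int_0^\infty y^\alpha\psi_m'\tilde\psi_m'$ by parts against the ODE for $\psi_m$ produces $d_s\lambda_m^s-\lambda_m\int_0^\infty y^\alpha\psi_m\tilde\psi_m$, so the cross-overlap cancels exactly against the $\lambda_m$-term in $(\cdot,\cdot)_m$ and no separate Wronskian identity is needed.
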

\begin{proof}
  Without loss of generality, we may write \(U=\sum_{m=0}^{\infty} u_{m}\phi_{m}(\vec{x})\psi_{m}\left(y\right)\), and consider \(V_{M}=\sum_{m=0}^{M-1} \alpha_{m}u_{m}\phi_{m}(\vec{x})\tilde{\psi}_{m}\left(y\right)\), where \(\alpha_{m}\in\mathbb{R}\) will be chosen below.
  Direct computation gives
  \begin{align*}
    \norm{U-V_{M}}_{\mathcal{H}_{\alpha}^{1}}^{2}
    &= \sum_{m=0}^{M-1}\sum_{n=0}^{M-1} u_{m}u_{n} \pair{\phi_{m}\left(\psi_{m}-\alpha_{m}\tilde{\psi}_{m}\right)}{\phi_{n}\left(\psi_{n}-\alpha_{n}\tilde{\psi}_{n}\right)}_{\mathcal{H}_{\alpha}^{1}} \\
      &\quad + 2\sum_{m=0}^{M-1}\sum_{n=M}^{\infty} u_{m}u_{n} \pair{\phi_{m}\left(\psi_{m}-\alpha_{m}\tilde{\psi}_{m}\right)}{\phi_{n}\psi_{n}}_{\mathcal{H}_{\alpha}^{1}} \\
      & \quad+ \sum_{m=M}^{\infty}\sum_{n=M}^{\infty} u_{m}u_{n} \pair{\phi_{m}\psi_{m}}{\phi_{n}\psi_{n}}_{\mathcal{H}_{\alpha}^{1}}.
  \end{align*}
  To deal with the first term, we observe that for arbitrary smooth functions \(h_{1}\) and \(h_{2}\) there holds
  \begin{align*}
    \pair{\phi_{m}\left(\vec{x}\right) h_{1}\left(y\right)}{\phi_{n}\left(\vec{x}\right)h_{2}\left(y\right)}_{\mathcal{H}_{\alpha}^{1}}
    &= \int_{\mathcal{C}}y^{\alpha} \grad \left[\phi_{m}\left(\vec{x}\right) h_{1}\left(y\right)\right]\cdot \grad \left[\phi_{n}\left(\vec{x}\right) h_{2}\left(y\right)\right] \\
    &= \int_{\Omega} \phi_{m} \phi_{n} \int_{0}^{\infty} y^{\alpha} h_{1}'' h_{2}' +
      \int_{\Omega} \grad_{\vec{x}}\phi_{m}\cdot \grad_{\vec{x}} \phi_{n} \int_{0}^{\infty} y^{\alpha} h_{1} h_{2} \\
    &= \delta_{nm}\ip{h_{1}}{h_{2}}_{m}
  \end{align*}
  where the inner product in the final equality is defined to be
  \begin{align*}
    \ip{h_{1}}{h_{2}}_{m} &= \int_{0}^{\infty}y^{\alpha} h_{1}' h_{2}' + \lambda_{m} \int_{0}^{\infty}y^{\alpha} h_{1}h_{2},
  \end{align*}
  with the induced norm denoted by \(\norm{\cdot}_{m}=\sqrt{\ip{\cdot}{\cdot}_{m}}\).
  In particular, from \cref{eq:massSpec,eq:stiffnesSpec} we obtain \(\norm{\psi_{m}}_{m}^{2} = d_{s}\lambda_{m}^{s}\).
  Therefore
  \begin{align*}
    \norm{U-V_{M}}_{\mathcal{H}_{\alpha}^{1}}^{2}
    &= \sum_{m=0}^{M-1}u_{m}^{2}\norm{\psi_{m}-\alpha_{m}\tilde{\psi}_{m}}_{m}^{2}  + \sum_{m=M}^{\infty}u_{m}^{2} \norm{\psi_{m}}_{m}^{2}.
  \end{align*}
  The coefficients \(\left\{\alpha_{m}\right\}\) are chosen to minimise the right-hand side.
  A simple computation reveals that the optimal choice is \(\alpha_{m}= \cos^{2} \theta_{m}\), where
  \begin{align*}
    \cos \theta_{m}
              &= \frac{\ip{\psi_{m}}{\tilde{\psi}_{m}}_{m}}{\norm{\psi_{m}}_{m} \norm{\tilde{\psi}_{m}}_{m}}
                = \sqrt{1-g\left(s, \tilde{\lambda}_{m} / \lambda_{m}\right)},
  \end{align*}
  so that
  \begin{align*}
    \norm{\psi_{m}-\alpha_{m}\tilde{\psi}_{m}}_{m}^{2}
              &= \norm{\psi_{m}}_{m}^{2}\sin^{2}\theta_{m} = d_{s}\lambda_{m}^{s}\sin^{2}\theta_{m}
              = d_{s}\lambda_{m}^{s}g\left(s, \tilde{\lambda}_{m} / \lambda_{m}\right)
  \end{align*}
  and the result follows as claimed.
\end{proof}

Observe that if the approximate eigenvalue coincides with the true eigenvalue, \(\tilde{\lambda}_{m}=\lambda_{m}\), then \(g\left(s,\tilde{\lambda}_{m} / \lambda_{m}\right)=0\) as one would expect.
By continuity, if the approximate eigenvalue is sufficiently close to the true eigenvalue, then \(g\left(s,\tilde{\lambda}_{m} / \lambda_{m}\right)\) will be small, meaning that \(\mathcal{V}_{M}\) will be a good approximation to \(\mathcal{H}_{\alpha}^{1}\left(\mathcal{C}\right)\).

The next result gives an error bound for the fully discrete scheme:

\begin{theorem}\label{thm:errorBound}
  Let \(f\in\tilde{H}^{r}\left(\Omega\right)\), for \(r\geq -s\), and choose \(M\) sufficiently large such that \(\lambda_{M}^{-(r+s)/2}\sim h^{\min\{k,r+s\}}\).
  Assume that for \(0\leq m\leq M-1\) it holds that
  \begin{align}
    g\left(s,  \tilde{\lambda}_{m} / \lambda_{m}\right)\leq \lambda_{m}^{r+s}h^{2\min\{k,r+s\}} \label{eq:eigenvalues}
  \end{align}
  and that
  \begin{align}
    \left(\frac{\tilde{\lambda}_{m}}{\lambda_{m}}\right)^{s}, \left(\frac{\lambda_{m}}{\tilde{\lambda}_{m}}\right)^{1-s} &\leq c_{\sigma}^{2} \label{eq:eigenvalues2}
  \end{align}
  with a positive constant \(c_{\sigma}\) that is independent of \(h\).
  Moreover, assume that there exist positive constants \(C_{0}\), \(C_{1}\) independent of \(h\) such that the following two inequalities hold for any \(\vec{\gamma}\in\mathbb{R}^{M}\):
  \begin{align}
    \sum_{m,n=0}^{M-1}\gamma_{m}\gamma_{n} \int_{\Omega} \left(\phi_{m}-\pi_{h}\phi_{m}\right)\left(\phi_{n}-\pi_{h}\phi_{n}\right) &\leq C_{0} \log(\lambda_{M}) \sum_{m=0}^{M-1}\gamma_{m}^{2}\norm{\phi_{m}-\pi_{h}\phi_{m}}_{L^{2}}^{2}, \label{eq:sCSI}\\
    \sum_{m,n=0}^{M-1}\gamma_{m}\gamma_{n} \int_{\Omega} \grad \left(\phi_{m}-\pi_{h}\phi_{m}\right) \cdot \grad \left(\phi_{n}-\pi_{h}\phi_{n}\right) &\leq C_{1} \log(\lambda_{M})  \sum_{m=0}^{M-1}\gamma_{m}^{2}\norm{\grad\left(\phi_{m}-\pi_{h}\phi_{m}\right)}_{L^{2}}^{2}, \label{eq:sCSIgrad}
  \end{align}
  where \(\pi_{h}\) is the Scott-Zhang interpolant \cite{ScottZhang1990_FiniteElementInterpolationNonsmooth}.
  Then, the solution \(U_{h,M}\) to the discretized extruded problem \eqref{eq:varSubspace} satisfies
  \begin{align*}
    \norm{U-U_{h,M}}_{\mathcal{H}^{1}_{\alpha}} & \leq C \abs{f}_{\tilde{H}^{r}}h^{\min\{k,r+s\}}\sqrt{\abs{\log h}},
  \end{align*}
  where \(C\) is independent of \(h\).
\end{theorem}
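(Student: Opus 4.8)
\emph{Proof strategy.} The plan is to exploit that the bilinear form in \eqref{eq:varSubspace} is precisely the inner product inducing $\norm{\cdot}_{\mathcal{H}^{1}_{\alpha}}$, so that $U_{h,M}$ is the $\mathcal{H}^{1}_{\alpha}$-orthogonal projection of $U$ onto $\mathcal{V}_{h,M}$ and hence $\norm{U-U_{h,M}}_{\mathcal{H}^{1}_{\alpha}}\le\norm{U-V}_{\mathcal{H}^{1}_{\alpha}}$ for every $V\in\mathcal{V}_{h,M}$. I would test with $V_{h,M}:=\sum_{m=0}^{M-1}\alpha_{m}u_{m}(\pi_{h}\phi_{m})(\vec{x})\tilde{\psi}_{m}(y)\in\mathcal{V}_{h,M}$, where $0\le\alpha_{m}=\cos^{2}\theta_{m}\le1$ are the optimal coefficients from the proof of \Cref{lem:semianalyticBound}, and split, by the triangle inequality,
\begin{align*}
  \norm{U-U_{h,M}}_{\mathcal{H}^{1}_{\alpha}}\le\norm{U-V_{M}^{\star}}_{\mathcal{H}^{1}_{\alpha}}+\norm{V_{M}^{\star}-V_{h,M}}_{\mathcal{H}^{1}_{\alpha}},
\end{align*}
where $V_{M}^{\star}:=\sum_{m=0}^{M-1}\alpha_{m}u_{m}\phi_{m}(\vec{x})\tilde{\psi}_{m}(y)\in\mathcal{V}_{M}$ is the semi-discrete approximant, for which $\norm{U-V_{M}^{\star}}_{\mathcal{H}^{1}_{\alpha}}^{2}=d_{s}\sum_{m}\beta_{m}u_{m}^{2}\lambda_{m}^{s}$ by \Cref{lem:semianalyticBound}. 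The first term is the semi-discrete error; the second is a genuine finite element error in the $\vec{x}$-variable, and that is where the work lies. Throughout set $\mu:=\min\{k,r+s\}$ and recall $u_{m}=\lambda_{m}^{-s}f_{m}$ and $\sum_{m}f_{m}^{2}\lambda_{m}^{r}=\abs{f}_{\tilde{H}^{r}}^{2}$.

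For the semi-discrete term I would estimate the sum in two pieces. For $m\le M-1$, hypothesis \eqref{eq:eigenvalues} gives $\beta_{m}u_{m}^{2}\lambda_{m}^{s}\le\lambda_{m}^{r+s}h^{2\mu}u_{m}^{2}\lambda_{m}^{s}=h^{2\mu}\lambda_{m}^{r}f_{m}^{2}$. For $m\ge M$, $\beta_{m}=1$ and, since $r+s\ge0$ and $\lambda_{m}\ge\lambda_{M}$, $u_{m}^{2}\lambda_{m}^{s}=\lambda_{m}^{-(r+s)}\lambda_{m}^{r}f_{m}^{2}\le\lambda_{M}^{-(r+s)}\lambda_{m}^{r}f_{m}^{2}$, with $\lambda_{M}^{-(r+s)}\sim h^{2\mu}$ by the choice of $M$. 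Summing over $m$ then yields $\norm{U-V_{M}^{\star}}_{\mathcal{H}^{1}_{\alpha}}^{2}\lesssim h^{2\mu}\abs{f}_{\tilde{H}^{r}}^{2}$, with \emph{no} logarithmic factor.

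The heart of the proof is the second term. Writing $W:=V_{M}^{\star}-V_{h,M}=\sum_{m<M}c_{m}(\phi_{m}-\pi_{h}\phi_{m})(\vec{x})\tilde{\psi}_{m}(y)$ with $c_{m}:=\alpha_{m}u_{m}$ (so $\abs{c_{m}}\le\abs{u_{m}}$) and separating $\grad W$ into its $\vec{x}$- and $y$-components,
\begin{align*}
  \norm{W}_{\mathcal{H}^{1}_{\alpha}}^{2}=\int_{\mathcal{C}}y^{\alpha}\Bigl|\sum_{m<M}c_{m}\tilde{\psi}_{m}(y)\grad(\phi_{m}-\pi_{h}\phi_{m})\Bigr|^{2}+\int_{\mathcal{C}}y^{\alpha}\Bigl(\sum_{m<M}c_{m}\tilde{\psi}_{m}'(y)(\phi_{m}-\pi_{h}\phi_{m})\Bigr)^{2}.
\end{align*}
The key idea is \emph{not} to bound the off-diagonal entries of the mass and stiffness matrices of $\{\tilde{\psi}_{m}\}$ --- which would cost a second logarithm and destroy the estimate --- but to apply the strengthened Cauchy-Schwarz inequalities \eqref{eq:sCSIgrad} and \eqref{eq:sCSI} \emph{pointwise in $y$}, with $\gamma_{m}=c_{m}\tilde{\psi}_{m}(y)$ for the first integral and $\gamma_{m}=c_{m}\tilde{\psi}_{m}'(y)$ for the second, and only afterwards integrate against $y^{\alpha}$ over $(0,\infty)$. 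The $y$-integration then collapses onto the diagonal entries $\int_{0}^{\infty}y^{\alpha}\tilde{\psi}_{m}^{2}=sd_{s}\tilde{\lambda}_{m}^{s-1}$ and $\int_{0}^{\infty}y^{\alpha}(\tilde{\psi}_{m}')^{2}=(1-s)d_{s}\tilde{\lambda}_{m}^{s}$ (from \eqref{eq:massSpec}--\eqref{eq:stiffnesSpec} with $\lambda_{m}$ replaced by $\tilde{\lambda}_{m}$), giving
\begin{align*}
  \norm{W}_{\mathcal{H}^{1}_{\alpha}}^{2}\lesssim\log(\lambda_{M})\sum_{m<M}c_{m}^{2}\Bigl[\tilde{\lambda}_{m}^{s-1}\norm{\grad(\phi_{m}-\pi_{h}\phi_{m})}_{L^{2}}^{2}+\tilde{\lambda}_{m}^{s}\norm{\phi_{m}-\pi_{h}\phi_{m}}_{L^{2}}^{2}\Bigr].
\end{align*}

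To conclude, I would use \eqref{eq:eigenvalues2} to replace $\tilde{\lambda}_{m}^{s}\lesssim\lambda_{m}^{s}$ and $\tilde{\lambda}_{m}^{s-1}\lesssim\lambda_{m}^{s-1}$, bound $\abs{c_{m}}\le\abs{u_{m}}=\lambda_{m}^{-s}\abs{f_{m}}$, and invoke the Scott-Zhang estimates $\norm{\phi_{m}-\pi_{h}\phi_{m}}_{L^{2}}\lesssim h^{\mu}\abs{\phi_{m}}_{H^{\mu}}$ and $\norm{\grad(\phi_{m}-\pi_{h}\phi_{m})}_{L^{2}}\lesssim h^{\mu}\abs{\phi_{m}}_{H^{\mu+1}}$ together with the eigenfunction bound $\abs{\phi_{m}}_{H^{t}}\lesssim\lambda_{m}^{t/2}$ and the fact $\mu\le r+s$; both summands then become $\lesssim h^{2\mu}\lambda_{m}^{\mu-s}f_{m}^{2}\lesssim h^{2\mu}\lambda_{m}^{r}f_{m}^{2}$, so that $\norm{W}_{\mathcal{H}^{1}_{\alpha}}^{2}\lesssim\log(\lambda_{M})h^{2\mu}\abs{f}_{\tilde{H}^{r}}^{2}$. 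Adding the two contributions and using $\log\lambda_{M}\sim\abs{\log h}$ (from the choice of $M$) gives the claimed estimate after taking square roots. I expect the main obstacle to be precisely the interplay in the finite element term between the non-orthogonal $y$-profiles $\tilde{\psi}_{m}$ and the finite element errors $\phi_{m}-\pi_{h}\phi_{m}$: one has to diagonalize first in $\vec{x}$, for each fixed $y$, via \eqref{eq:sCSI}--\eqref{eq:sCSIgrad}, so that only a single logarithm survives; a secondary, more routine point is keeping exact track of the powers of $\lambda_{m}$ in the last step so that the interpolation estimates deliver the uniform rate $h^{\mu}$ for every $m<M$, which relies on the regularity $\phi_{m}\in H^{\mu+1}$ of the eigenfunctions.
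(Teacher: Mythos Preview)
Your proof follows the paper's architecture exactly: C\'ea's lemma, the same test function $V_{h,M}=\sum_{m<M}\alpha_m u_m(\pi_h\phi_m)\tilde\psi_m$, the triangle split through $V_M^\star$, and \Cref{lem:semianalyticBound} together with \eqref{eq:eigenvalues} for the semi-discrete piece. The two arguments diverge only in the treatment of the finite element term $\norm{V_M^\star-V_{h,M}}_{\mathcal{H}^1_\alpha}$, and in both places your execution is a little cleaner. First, the paper expands the double sum, bounds each $y$-integral $\int y^\alpha\tilde\psi_m\tilde\psi_n$ term-by-term via Cauchy--Schwarz, and only then applies \eqref{eq:sCSI}--\eqref{eq:sCSIgrad} with $\gamma_m=\alpha_m u_m(\int y^\alpha\tilde\psi_m^2)^{1/2}$; your device of applying \eqref{eq:sCSI}--\eqref{eq:sCSIgrad} \emph{pointwise in $y$} with $\gamma_m=c_m\tilde\psi_m(y)$ (resp.\ $c_m\tilde\psi_m'(y)$) and integrating afterwards lands on the same diagonal sum without that intermediate step and without having to worry about signs in the cross terms. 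Second, the paper uses the integer-order Scott--Zhang estimates $\norm{\phi_m-\pi_h\phi_m}_{L^2}\lesssim h^{k+1}\lambda_m^{(k+1)/2}$ and $\norm{\grad(\phi_m-\pi_h\phi_m)}_{L^2}\lesssim h^{k}\lambda_m^{(k+1)/2}$, which leaves residual powers $\lambda_m^{k-(r+s)}$ and $\lambda_m^{k+1-(r+s)}$ that must be controlled by a case analysis using $\lambda_{M-1}\lesssim h^{-2\mu/(r+s)}$; your use of the order-$\mu$ estimates (valid by interpolation, since $\mu\le k$) produces $h^{2\mu}\lambda_m^{\mu-s}f_m^2\le Ch^{2\mu}\lambda_m^{r}f_m^2$ directly and avoids the case split. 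Both routes give the same final bound.
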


\begin{proof}
  By C\'ea's Lemma, the discretization error is bounded by
  \begin{align*}
    \norm{U-U_{h,M}}_{\mathcal{H}^{1}_{\alpha}}&\leq C\inf_{V_{h,M}\in\mathcal{V}_{h,M}} \norm{U-V_{h,M}}_{\mathcal{H}^{1}_{\alpha}}.
  \end{align*}
  By analogy with the proof of \Cref{lem:semianalyticBound}, we choose \(V_{h,M}\in\mathcal{V}_{h,M}\) to be
  \begin{align*}
    V_{h,M}&=\sum_{m=0}^{M-1}\alpha_{m}u_{m}\left(\pi_{h}\phi_{m}\right)\left(\vec{x}\right)\tilde{\psi}_{m}\left(y\right),
  \end{align*}
  where \(\alpha_{m}=\cos \theta_{m}\) and \(\pi_{h}\) is the Scott-Zhang interpolant \cite{ScottZhang1990_FiniteElementInterpolationNonsmooth}.
  The triangle inequality gives
  \begin{align*}
    \norm{U-V_{h,M}}_{\mathcal{H}^{1}_{\alpha}} &\leq \norm{U-V_{M}}_{\mathcal{H}^{1}_{\alpha}} + \norm{V_{M}-V_{h,M}}_{\mathcal{H}^{1}_{\alpha}}.
  \end{align*}
  The first term is easily estimated thanks to \Cref{lem:semianalyticBound,eq:eigenvalues}:
  \begin{align}
    \norm{U-V_{M}}_{\mathcal{H}^{1}_{\alpha}}^{2}
    &= d_{s}\sum_{m=0}^{M-1}u_{m}^{2}\lambda_{m}^{s}g\left(s,\tilde{\lambda}_{m} / \lambda_{m}\right) + d_{s}\sum_{m=M}^{\infty}u_{m}^{2}\lambda_{m}^{s} \nonumber \\
    &\leq d_{s}h^{2\min\{k,r+s\}}\sum_{m=0}^{M-1}u_{m}^{2}\lambda_{m}^{r+2s} + d_{s} \lambda_{M}^{-(r+s)}  \sum_{m=M}^{\infty} u_{m}^{2} \lambda_{m}^{r+2s} \nonumber \\
    &\leq d_{s}h^{2\min\{k,r+s\}} \abs{u}_{\tilde{H}^{r+2s}}^{2}, \label{eq:spectralBound}
  \end{align}
  where we recall \(M\) is chosen large enough such that \(\lambda_{M}^{-(r+s)/2}\sim h^{\min\{k,r+s\}}\).

  Turning to the second term, elementary manipulation gives
  \begin{align*}
    &\norm{V_{M}-V_{h,M}}_{\mathcal{H}^{1}_{\alpha}}^{2}\\
    =& \sum_{m=0}^{M-1}\sum_{n=0}^{M-1} \alpha_{m}\alpha_{n}u_{m}u_{n}\int_{\mathcal{C}}y^{\alpha} \grad\left[\left(\phi_{m}-\pi_{h}\phi_{m}\right)\tilde{\psi}_{m}\right]\cdot\grad\left[\left(\phi_{n}-\pi_{h}\phi_{n}\right)\tilde{\psi}_{n}\right]\\
    =& \sum_{m=0}^{M-1}\sum_{n=0}^{M-1} \alpha_{m}\alpha_{n}u_{m}u_{n} \left\{\int_{\Omega}\grad\left(\phi_{m}-\pi_{h}\phi_{m}\right)\cdot\grad\left(\phi_{n}-\pi_{h}\phi_{n}\right) \int_{0}^{\infty}y^{\alpha}\tilde{\psi}_{m}\tilde{\psi}_{n} \right.\\
    &\qquad\left. + \int_{\Omega}\left(\phi_{m}-\pi_{h}\phi_{m}\right) \left(\phi_{n}-\pi_{h}\phi_{n}\right) \int_{0}^{\infty}y^{\alpha}\tilde{\psi}_{m}'\tilde{\psi}_{n}'\right\} \\
    \leq& \sum_{m=0}^{M-1}\sum_{n=0}^{M-1} \alpha_{m}\alpha_{n}u_{m}u_{n} \left\{\int_{\Omega}\grad\left(\phi_{m}-\pi_{h}\phi_{m}\right)\cdot\grad\left(\phi_{n}-\pi_{h}\phi_{n}\right) \sqrt{\int_{0}^{\infty}y^{\alpha}\tilde{\psi}_{m}^{2}}\sqrt{\int_{0}^{\infty}y^{\alpha}\tilde{\psi}_{n}^{2}} \right.\\
    &\qquad\left. + \int_{\Omega}\left(\phi_{m}-\pi_{h}\phi_{m}\right) \left(\phi_{n}-\pi_{h}\phi_{n}\right) \sqrt{\int_{0}^{\infty}y^{\alpha}\left(\tilde{\psi}_{m}'\right)^{2}}\sqrt{\int_{0}^{\infty}y^{\alpha}\left(\tilde{\psi}_{n}'\right)^{2}}\right\} \\
    \leq& \log(\lambda_{M}) \sum_{m=0}^{M-1} \alpha_{m}^{2}u_{m}^{2} \left\{
      C_{1}\norm{\grad\phi_{m}-\grad\pi_{h}\phi_{m}}_{L^{2}}^{2} \int_{0}^{\infty}y^{\alpha}\tilde{\psi}_{m}^{2}
      + C_{0}\norm{\phi_{m}-\pi_{h}\phi_{m}}_{L^{2}}^{2} \int_{0}^{\infty}y^{\alpha}\left(\tilde{\psi}_{m}'\right)^{2}
      \right\}\\
    \leq& \max\{C_{0},C_{1}\} \log(\lambda_{M}) \sum_{m=0}^{M-1} u_{m}^{2} \left\{
      \norm{\grad\phi_{m}-\grad\pi_{h}\phi_{m}}_{L^{2}}^{2} \int_{0}^{\infty}y^{\alpha}\tilde{\psi}_{m}^{2}
      + \norm{\phi_{m}-\pi_{h}\phi_{m}}_{L^{2}}^{2} \int_{0}^{\infty}y^{\alpha}\left(\tilde{\psi}_{m}'\right)^{2}
      \right\},
  \end{align*}
  where we used \eqref{eq:sCSI}, \eqref{eq:sCSIgrad}, and that \(\alpha_{m}^{2}\leq 1\).
  Standard properties of the Scott-Zhang interpolant give
  \begin{align*}
    \norm{\grad\phi_{m}-\grad\pi_{h}\phi_{m}}_{L^{2}}&\leq Ch^{k}\abs{\phi_{m}}_{H^{k+1}}\leq Ch^{k}\lambda_{m}^{(k+1)/2},\\
    \norm{\phi_{m}-\pi_{h}\phi_{m}}_{L^{2}}&\leq Ch^{k+1}\abs{\phi_{m}}_{H^{k+1}}\leq Ch^{k+1}\lambda_{m}^{(k+1)/2},
  \end{align*}
  while, from \cref{eq:massSpec,eq:stiffnesSpec},
  \begin{align*}
    \int_{0}^{\infty}y^{\alpha}\tilde{\psi}_{m}^{2} &= d_{s}s\tilde{\lambda}_{m}^{s-1}, &\text{and} &&
    \int_{0}^{\infty}y^{\alpha}\left(\tilde{\psi}_{m}'\right)^{2} &= d_{s}(1-s)\tilde{\lambda}_{m}^{s}.
  \end{align*}
  Hence,
  \begin{align*}
    \norm{V_{M}-V_{h,M}}_{\mathcal{H}^{1}_{\alpha}}^{2}
    &\leq C \log(\lambda_{M}) h^{2k} \sum_{m=0}^{M-1} u_{m}^{2}\lambda_{m}^{k+1}\tilde{\lambda}_{m}^{s-1} + C \log(M) h^{2k+2}\sum_{m=0}^{M-1}u_{m}^{2}\lambda_{m}^{k+1}\tilde{\lambda}_{m}^{s}\\
    &\leq C \abs{u}_{\tilde{H}^{r+2s}}^{2} \abs{\log h} \left[h^{2k} \max_{m=0,\dots,M-1}\lambda_{m}^{k-(r+s)}\left(\frac{\lambda_{m}}{\tilde{\lambda}_{m}}\right)^{1-s} \right. \\
      &\qquad \left.+ h^{2k+2}\max_{m=0,\dots,M-1}\lambda_{m}^{k+1-(r+s)}\left(\frac{\tilde{\lambda}_{m}}{\lambda_{m}}\right)^{s}\right],
  \end{align*}
  where we used the fact that \(\log(\lambda_{M}) \sim \abs{\log h}\).
  Thanks to assumption \eqref{eq:eigenvalues2}, we obtain
  \begin{align*}
    \norm{V_{M}-V_{h,M}}_{\mathcal{H}^{1}_{\alpha}}^{2}
    &\leq C \abs{u}_{\tilde{H}^{r+2s}}^{2} \abs{\log h}
      \begin{cases}
        \lambda_{M-1}^{-(r+s)}\left(h^{2k}\lambda_{M-1}^{k}+h^{2k+2}\lambda_{M-1}^{k+1}\right) & \text{if } 0\leq r+s\leq k, \\
        h^{2k}\left(1+h^{2}\lambda_{M-1}^{k+1-(r+s)}\right) & \text{if } k\leq r+s \leq k+1, \\
        h^{2k}& \text{if } r+s\geq k+1,
      \end{cases}
  \end{align*}
  Recalling that \(M\) is chosen such that \(\lambda_{M}^{-(r+s)/2}\sim h^{\min\{k,r+s\}}\), we obtain
  \begin{align}
    \norm{V_{M}-V_{h,M}}_{\mathcal{H}^{1}_{\alpha}}
    &\leq C\abs{u}_{\tilde{H}^{r+2s}}h^{\min\{k,r+s\}}\sqrt{\abs{\log h}}. \label{eq:feBound}
  \end{align}

  Finally, by combining \cref{eq:spectralBound,eq:feBound}, we deduce that
  \begin{align*}
    \norm{U-U_{h,M}}_{\mathcal{H}^{1}_{\alpha}}
    &\leq C\norm{U-V_{h,M}}_{\mathcal{H}^{1}_{\alpha}}\\
    &\leq C\left(\norm{U-V_{M}}_{\mathcal{H}^{1}_{\alpha}}+\norm{V_{M}-V_{h,M}}_{\mathcal{H}^{1}_{\alpha}}\right) \\
    &\leq C\abs{u}_{\tilde{H}^{r+2s}} h^{\min\{k,r+s\}}\sqrt{\abs{\log h}}\\
    &= C\abs{f}_{\tilde{H}^{r}} h^{\min\{k,r+s\}}\sqrt{\abs{\log h}},
  \end{align*}
  since \(\abs{u}_{\tilde{H}^{r+2s}}=\abs{f}_{\tilde{H}^{r}}\).
\end{proof}

\Cref{thm:errorBound} contains two types of assumption.
Assumptions \eqref{eq:eigenvalues} and \eqref{eq:eigenvalues2} concern the approximation of the exact eigenvalues \(\left\{\lambda_{m}\right\}\) by \(\left\{\tilde{\lambda}_{m}\right\}\), which will discussed in the next section.
On the other hand, assumptions \eqref{eq:sCSI} and \eqref{eq:sCSIgrad} concern the \emph{orthogonality} of the finite element approximation error of the eigenfunctions.
In lieu of the absence of a proof of the validity of \eqref{eq:sCSI} and \eqref{eq:sCSIgrad} in general, we provide a justification in the cases where \(\Omega\) is either an interval on the real line, or the unit disc in the plane.

\emph{Example 1}: Suppose \(\Omega\) is the unit interval, \(\mathcal{T}_{h}\) is a uniform mesh with nodes \(x_{j}=jh\), \(j=0,\dots,n\), and \(\left\{\Phi_{j}\right\}\) are the piecewise linear Lagrange basis functions.
The Scott-Zhang interpolant of the eigenfunction \(\phi_{m}\left(x\right)=\frac{1}{\sqrt{\pi}}\sin\left(m \pi x\right)\) of the integer order Laplacian is given by \(\pi_{h}\phi_{m}= \vec{c}_{m}^{h}\cdot \vec{\Phi}\), where \(\vec{\Phi}\) is the vector of finite element basis functions, and \(\vec{c}_{m}^{h}=\left\{\phi_{m}\left(x_{j}\right)\right\}_{j=0}^{n}=\left\{\frac{1}{\sqrt{\pi}}\sin\left(m\pi x_{j}\right)\right\}_{j=0}^{n}\) is the finite element coefficient vector.
Moreover, the \(L^{2}\)-projection of \(\phi_{m}\) onto the space of piecewise linear functions \(V_{h}\) is given by \(\vec{c}_{m}^{L^{2}}\cdot \vec{\Phi}\), with coefficient vector \(\vec{c}_{m}^{L^{2}}=\left\{\int_{0}^{1}\phi_{m}\left(x\right)\Phi_{j}\left(x\right)\right\}_{j=0}^{n}\).
Hence, the left-hand side of \eqref{eq:sCSI} can be written as
\begin{align*}
  \int_{\Omega} \left(\phi_{m}-\pi_{h}\phi_{m}\right)\left(\phi_{n}-\pi_{h}\phi_{n}\right)
  &= \delta_{mn} - \vec{c}_{m}^{h}\cdot\vec{c}_{n}^{L^{2}} - \vec{c}_{m}^{L^{2}}\cdot\vec{c}_{n}^{h} + \vec{c}_{m}^{h}\cdot \mat{M}_{FE}\vec{c}_{n}^{h},
\end{align*}
where \(\mat{M}_{FE}\) is the mass matrix, and we used the orthogonality of the eigenfunctions \(\phi_{m}\) and \(\phi_{n}\).
Now
\begin{align*}
  \int_{0}^{1}\phi_{m}\left(x\right)\Phi_{j}\left(x\right) &= \frac{1}{\sqrt{\pi}} \sin\left(m\pi x_{j}\right) \frac{2-2\cos\left(m\pi h\right)}{\pi^{2}hm^{2}},
\end{align*}
and it follows that \(\vec{c}_{m}^{h}\) and \(\vec{c}_{m}^{L^{2}}\) are collinear.
Moreover, we recognise that \(\vec{c}_{m}^{h}\) are in fact the orthogonal eigenvectors of the tridiagonal mass matrix.
Therefore we have shown that
\begin{align*}
  \int_{\Omega} \left(\phi_{m}-\pi_{h}\phi_{m}\right)\left(\phi_{n}-\pi_{h}\phi_{n}\right) &= \delta_{mn}\norm{\phi_{m}-\pi_{h}\phi_{m}}_{L^{2}}^{2},
\end{align*}
and \eqref{eq:sCSI} holds (without the factor \(\log(\lambda_{M})\)).
A similar argument applies for \eqref{eq:sCSIgrad}.

\emph{Example 2}: \(\Omega\) is the unit disc.
In this case, we verify numerically that \eqref{eq:sCSI} and \eqref{eq:sCSIgrad} hold.
In \Cref{fig:sCSI}, we plot \(\rho\left(\mat{D}_{0}^{-1}\mat{E}_{0}\right)\) and \(\rho\left(\mat{D}_{1}^{-1}\mat{E}_{1}\right)\) versus \(M\), where
\begin{align*}
  \mat{E}_{0,mn} &= \int_{\Omega} \left(\phi_{m}-\pi_{h}\phi_{m}\right)\left(\phi_{n}-\pi_{h}\phi_{n}\right), \\
  \mat{E}_{1,mn} &= \int_{\Omega} \grad\left(\phi_{m}-\pi_{h}\phi_{m}\right)\cdot\grad\left(\phi_{n}-\pi_{h}\phi_{n}\right),
\end{align*}
for \(0\leq m,n \leq M-1\) and \(\mat{D}_{0}\) and \(\mat{D}_{1}\) are the diagonals of \(\mat{E}_{0}\) and \(\mat{E}_{1}\) respectively.
In order for assumptions \eqref{eq:sCSI} and \eqref{eq:sCSIgrad} to be satisfied, it suffices to show that
\begin{align}
  \rho\left(\mat{D}_{k}^{-1}\mat{E}_{k}\right)&\leq C_{k}\log(\lambda_{M}), \quad k=1,2. \label{eq:refsCSI}
\end{align}
In \Cref{fig:sCSI} we present the numerical values of the quantities appearing in \eqref{eq:refsCSI} for a globally quasi-uniform mesh with about 4,000 vertices for \(M\in\left\{1,\dots,100\right\}\), which suggests that \eqref{eq:sCSI}--\eqref{eq:sCSIgrad} are valid for this case.

\begin{figure}
  \centering
  \includegraphics[]{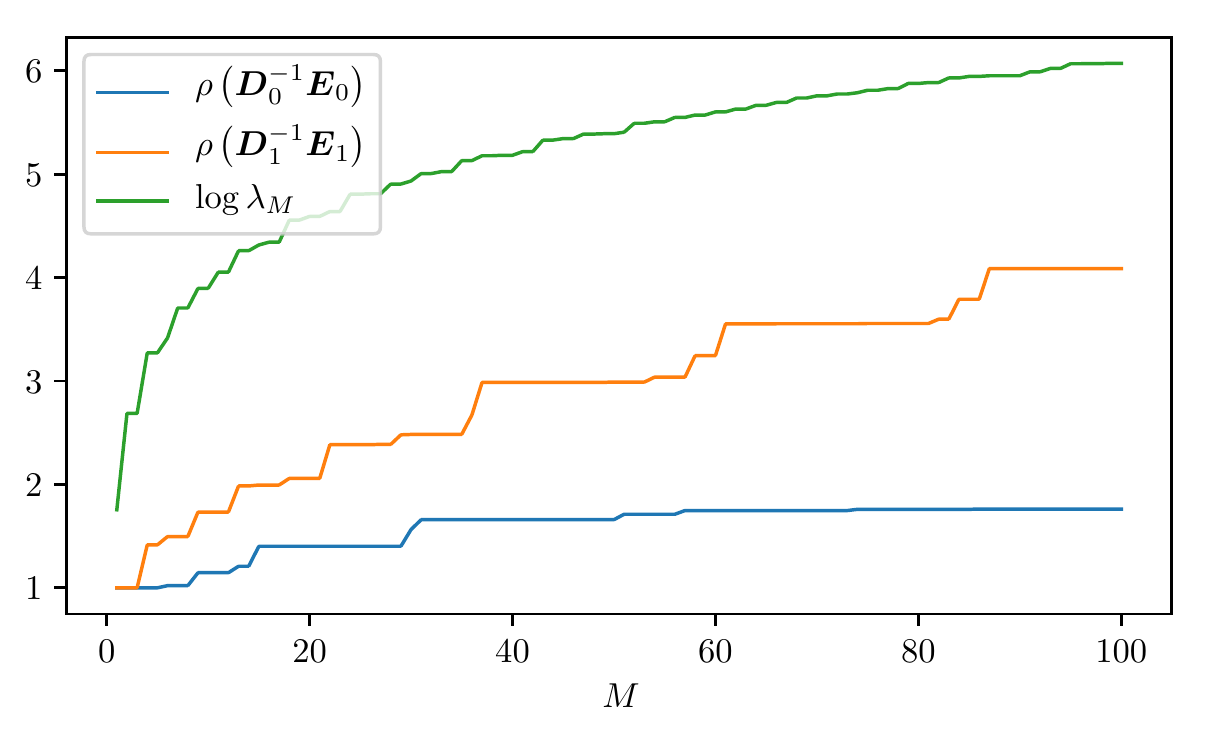}
  \caption{
    Numerical verification that \eqref{eq:sCSI} and \eqref{eq:sCSIgrad} hold in the case of the unit disc.
  }
  \label{fig:sCSI}
\end{figure}

\section{Choice of Approximate Eigenvalues \(\tilde{\lambda}_{m}\approx \lambda_{m}\)}
\label{sec:choice-appr-eigenv}

How can we find approximations \(\tilde{\lambda}_{m}\) for the eigenvalues \(\lambda_{m}\) of the standard Laplacian that satisfy conditions \eqref{eq:eigenvalues} and \eqref{eq:eigenvalues2} in \Cref{thm:errorBound} while, ideally, keeping the number of distinct approximate eigenvalues \(\tilde{M}\) as small as possible?
The following technical lemma will be useful:
\begin{lemma}\label{lem:bound}
  Let \(s\in(0,1)\), \(0\leq \varepsilon \leq \min\left\{\frac{e}{2}\frac{\min\{s,1-s\}}{\max\{s,1-s\}},1\right\}\) and \(\kappa_{s}=\sqrt{\frac{2}{e}\frac{1}{s(1-s)}}\). If
  \begin{align*}
    \abs{\log \rho} \leq \kappa_{s}\sqrt{\varepsilon}
    \quad\text{ then }\quad
    g\left(s,\rho\right)&\leq \varepsilon \text{ and } \max\{\rho^{s},\rho^{s-1}\}\leq e.
  \end{align*}
\end{lemma}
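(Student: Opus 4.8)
The plan is to analyze the scalar function $g(s,\rho)$ directly and reduce everything to a one‑variable estimate in the quantity $t=\log\rho$. First I would record the elementary observation that, writing $\rho = e^{t}$, the denominator in the definition of $g$ is
\[
  D(t) := (1-s)e^{st} + s\,e^{(s-1)t},
\]
a convex function of $t$ with $D(0)=1$ and $D'(0) = s(1-s) - s(1-s) = 0$, so $t=0$ is its global minimum and $D(t)\ge 1$ for all $t$; consequently $g(s,\rho) = 1 - 1/D(t) \in [0,1)$ always, and $g$ is controlled by how far $D(t)$ exceeds $1$. Since $g(s,\rho)\le D(t)-1$ (because $1-1/D = (D-1)/D \le D-1$ when $D\ge 1$), it suffices to show $D(t) - 1 \le \varepsilon$ under the hypothesis $|t|\le \kappa_s\sqrt{\varepsilon}$.

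Next I would Taylor‑expand $D$ about $t=0$ with the integral/Lagrange remainder. Because $D'(0)=0$, we get $D(t) - 1 = \tfrac12 D''(\xi)\,t^{2}$ for some $\xi$ between $0$ and $t$, where $D''(\tau) = (1-s)s^{2}e^{s\tau} + s(1-s)^{2}e^{(s-1)\tau} = s(1-s)\bigl(s\,e^{s\tau} + (1-s)e^{(s-1)\tau}\bigr)$. On the interval $|\tau|\le |t|$ one bounds the bracket crudely by $\max\{e^{s|t|}, e^{(1-s)|t|}\} \le e^{|t|}$ (or by a convexity/weighted‑average argument), giving $D''(\xi) \le s(1-s)e^{|t|}$. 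Using the hypothesis $|t| \le \kappa_s\sqrt\varepsilon$ together with the stated bound $\varepsilon \le \tfrac{e}{2}\tfrac{\min\{s,1-s\}}{\max\{s,1-s\}} \le \tfrac{e}{2}$ and the definition $\kappa_s^{2} = \tfrac{2}{e}\cdot\tfrac{1}{s(1-s)}$, one checks $|t|^{2} \le \kappa_s^{2}\varepsilon \le \kappa_s^{2}\cdot\tfrac{e}{2} = 1/(s(1-s)) $ — wait, that is not small; rather the point is that $|t|\le 1$ is not guaranteed, so the exponential factor $e^{|t|}$ must be absorbed carefully. This is the one place requiring attention: I would instead keep $e^{|t|}$ and write
\[
  D(t)-1 \;\le\; \tfrac12\, s(1-s)\,e^{|t|}\, t^{2} \;\le\; \tfrac12\, s(1-s)\,e^{|t|}\,\kappa_s^{2}\,\varepsilon \;=\; \tfrac1e\,e^{|t|}\,\varepsilon,
\]
so it remains only to verify $e^{|t|} \le e$, i.e. $|t|\le 1$; and indeed $|t|^{2}\le\kappa_s^{2}\varepsilon$, while the constraint on $\varepsilon$ forces $\kappa_s^{2}\varepsilon = \tfrac{2}{e}\tfrac{\varepsilon}{s(1-s)} \le \tfrac{2}{e}\cdot\tfrac{e}{2}\cdot\tfrac{\min\{s,1-s\}}{\max\{s,1-s\}s(1-s)} = \tfrac{1}{(\max\{s,1-s\})^{2}} \le \tfrac{1}{1/4}$… so I would need the sharper bookkeeping $\min\{s,1-s\}\max\{s,1-s\} = s(1-s)$ to get exactly $\kappa_s^{2}\varepsilon \le 1/\max\{s,1-s\}^{2}\le 4$, hence $|t|\le 2$ — not quite $\le 1$. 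The clean fix is to bound the bracket by $e^{s|t|}$ or $e^{(1-s)|t|}$ selectively (whichever exponent is present) rather than by $e^{|t|}$: then $D''(\xi)\le s(1-s)e^{\max\{s,1-s\}|t|}$, and $\max\{s,1-s\}\,|t| \le \max\{s,1-s\}\kappa_s\sqrt\varepsilon$, whose square is $\max\{s,1-s\}^{2}\kappa_s^{2}\varepsilon \le \max\{s,1-s\}^{2}\cdot\tfrac{2}{e}\cdot\tfrac{1}{s(1-s)}\cdot\tfrac{e}{2}\cdot\tfrac{\min\{s,1-s\}}{\max\{s,1-s\}} = \tfrac{\max\{s,1-s\}\min\{s,1-s\}}{s(1-s)} = 1$. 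Thus $\max\{s,1-s\}|t|\le 1$, giving $D''(\xi)\le s(1-s)\,e$, hence $D(t)-1 \le \tfrac{e}{2}s(1-s)\kappa_s^{2}\varepsilon = \varepsilon$, and also $g(s,\rho)\le D(t)-1\le\varepsilon$.

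Finally, for the second conclusion $\max\{\rho^{s},\rho^{s-1}\}\le e$: note $\rho^{s} = e^{st} \le e^{|s t|} \le e^{\max\{s,1-s\}|t|} \le e$ by the bound just established, and likewise $\rho^{s-1} = e^{(s-1)t} = e^{-(1-s)t} \le e^{(1-s)|t|}\le e^{\max\{s,1-s\}|t|}\le e$. So both claims follow from the single estimate $\max\{s,1-s\}\,|\log\rho| \le 1$, which is exactly what the hypothesis $|\log\rho|\le\kappa_s\sqrt\varepsilon$ combined with the upper bound on $\varepsilon$ delivers. The main obstacle, as indicated, is purely the constant‑tracking in the Taylor remainder — choosing to bound the second derivative via the exponent actually appearing ($s$ or $1-s$) rather than by the gross bound $e^{|t|}$, so that the factor $\min\{s,1-s\}/\max\{s,1-s\}$ in the admissible range of $\varepsilon$ is used efficiently; no genuinely hard analysis is involved.
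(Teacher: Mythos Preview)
Your argument is correct and is essentially the paper's own proof: set $t=\log\rho$, Taylor-expand the denominator $D(t)=(1-s)e^{st}+se^{(s-1)t}$ to second order, bound the remainder via $e^{\max\{s,1-s\}|t|}$, and use the hypothesis on $\varepsilon$ to get $\max\{s,1-s\}|t|\le 1$, yielding $D(t)\le 1+\varepsilon$ and $\max\{\rho^{s},\rho^{s-1}\}\le e$. The only cosmetic difference is that you conclude $g=(D-1)/D\le D-1\le\varepsilon$ directly, whereas the paper inserts the extra step $1+\varepsilon\le 1/(1-\varepsilon)$ before inverting.
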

\begin{proof}
  Set \(\gamma=\log \rho\) and assume \(\abs{\gamma} \leq \kappa_{s}\sqrt{\varepsilon}\leq \frac{1}{\max\{s,1-s\}}\).
  Now, by Taylor's Theorem,
  \begin{align*}
    (1-s)\exp{s\gamma}+s\exp{(s-1)\gamma}
    &= (1-s) \left[1+s\gamma + \frac{1}{2}s^{2}\gamma^{2}\exp{s\xi}\right] \\
    &\quad + s\left[1+(s-1)\gamma + \frac{1}{2}(s-1)^{2}\gamma^{2}\exp{(s-1)\xi}\right]
  \end{align*}
  for some \(\xi\) between \(0\) and \(\gamma\) and therefore
  \begin{align*}
    (1-s)\exp{s\gamma}+s\exp{(s-1)\gamma}
    &\leq 1 + \frac{s(1-s)}{2}\gamma^{2}\exp{\max\{s,1-s\}\abs{\gamma}}\\
    &\leq 1 + \frac{s(1-s)}{2} \frac{2}{e} \frac{1}{s(1-s)}\varepsilon e \\
    &= 1+ \varepsilon\\
    &\leq \frac{1}{1-\varepsilon}.
  \end{align*}
  Hence
  \begin{align*}
    g\left(s,\rho\right)=1-\frac{1}{(1-s)\exp{s\gamma}+s\exp{(s-1)\gamma}}&\leq \varepsilon,
  \end{align*}
  and
  \begin{align*}
    \max\{\rho^{s},\rho^{s-1}\}
    &\leq \exp{\kappa_{s} \max\{s,1-s\}\sqrt{\varepsilon}}\leq e.
  \end{align*}
\end{proof}

The lemma shows that, in order to satisfy both \eqref{eq:eigenvalues} and \eqref{eq:eigenvalues2}, it suffices that the ratio \(\tilde{\lambda}_{m} / \lambda_{m}\) satisfies \(\abs{\log \tilde{\lambda}_{m} / \lambda_{m}}\leq \kappa_{s} \lambda_{m}^{(r+s)/2}h^{\min\{k,r+s\}}\).

\subsection{Approximation of Upper Part of the Spectrum - Weyl Asymptotics}
\label{sec:upper-part-spectrum}

If \(m\left(\lambda\right)\) designates the number of eigenvalues that are smaller than \(\lambda\geq 0\), then Weyl's conjecture reads
\begin{align}
  m\left(\lambda\right) &= \left(2\pi\right)^{-d}\omega_{d}\abs{\Omega}\lambda^{d/2} - \frac{1}{4}\left(2\pi\right)^{1-d}\omega_{d-1}\abs{\partial\Omega}\lambda^{(d-1)/2}+o\left(\lambda^{(d-1)/2} \right), \label{eq:WeylLaw}
\end{align}
where \(\omega_{d}=\frac{\pi^{d/2}}{\Gamma\left(1+d/2\right)}\) is the volume of the unit ball in \(\mathbb{R}^{d}\).
For more details on the exact conditions under which Weyl's law has been shown to be valid, see e.g. \cite{LiYau1983_SchroedingerEquationEigenvalueProblem,Ivrii1980_SecondTermSpectralAsymptotic}.
Neglecting all lower order terms on the right-hand side of \eqref{eq:WeylLaw} motivates the eigenvalue approximation \(\tilde{\lambda}_{m}^{\text{Weyl}}:= C_{d}\left(\frac{m}{\abs{\Omega}}\right)^{2/d}\) with \(C_{d}= 4\pi\Gamma\left(1+d/2\right)^{2/d}\).
Taking \(\lambda=\lambda_{m}\) in \eqref{eq:WeylLaw}, one obtains that \(\tilde{\lambda}_{m}^{\text{Weyl}}\) satisfies
\begin{align}
  \tilde{\lambda}_{m}^{\text{Weyl}} &= \lambda_{m}\left[1-C\lambda_{m}^{-1/2}+o\left(\lambda_{m}^{-1/2}\right)\right].\label{eq:expansionWeyl}
\end{align}
Combining \eqref{eq:expansionWeyl} with \Cref{lem:bound} shows \eqref{eq:eigenvalues2} is satisfied for sufficiently large \(\lambda_{m}\) and
\begin{align*}
  g(s,\tilde{\lambda}_{m}^{\text{Weyl}} / \lambda_{m})\leq \frac{C}{\lambda_{m}}.
\end{align*}
Therefore, the Weyl approximation \(\tilde{\lambda}_{m}^{\text{Weyl}}\) satisfies \eqref{eq:eigenvalues}, provided that \(\lambda_{m}^{-1}=\mathcal{O}\left(\lambda_{m}^{r+s}h^{2\min\{k,r+s\}}\right)\), which will be the case for all \(m\geq m_{0}\), where
\begin{align*}
  m_{0} =\mathcal{O}\left(h^{-d\min\{k,r+s\}/(1+r+s)}\right)=\mathcal{O}\left(n^{\min\{k,r+s\}/(1+r+s)}\right).
\end{align*}
We expect Weyl's conjecture to provide a good estimate for the eigenvalues in the upper part of the spectrum where \(m_{0}\leq m\leq M\).

We illustrate the approximation of the spectrum using Weyl's conjecture in the case \(\Omega=B(0,1)\subset \mathbb{R}^{2}\) (for which the exact eigenvalues \(\lambda_{m}\) are known).
\Cref{fig:bounds} shows the quantities on each side of inequality \eqref{eq:eigenvalues} for the choice \(\tilde{\lambda}_{m}=\tilde{\lambda}_{m}^{\text{Weyl}}\), where \(h\) corresponds to a quasi-uniform triangulation of the unit disc using about one million nodes.
We observe that:  the inequality \eqref{eq:eigenvalues} holds for the Weyl approximation in all but for the first few eigenvalues; and that \(g\left(s,\tilde{\lambda}_{m}^{\text{Weyl}} / \lambda_{m}\right)\) asymptotically behaves like \(\lambda_{m}^{-1}\) with only a mild variation with \(s\).
The quantity appearing on the left-hand side of the inequality \eqref{eq:eigenvalues} depends on the fractional order \(s\), and decreases as \(m\) increases.
The right-hand side, however, depends on the fractional order, the mesh size \(h\), the order \(k\) of the finite element space, and increases as \(m\) increases.
Here, we plot the right-hand side of the inequality for \(s+r\in\{0.75,1.25\}\) and \(k=1\).
We observe that as the mesh is refined, the number of eigenvalues approximated using the Weyl conjecture which fail to satisfy inequality \eqref{eq:eigenvalues} grows.

\begin{figure}
  \centering
  \includegraphics{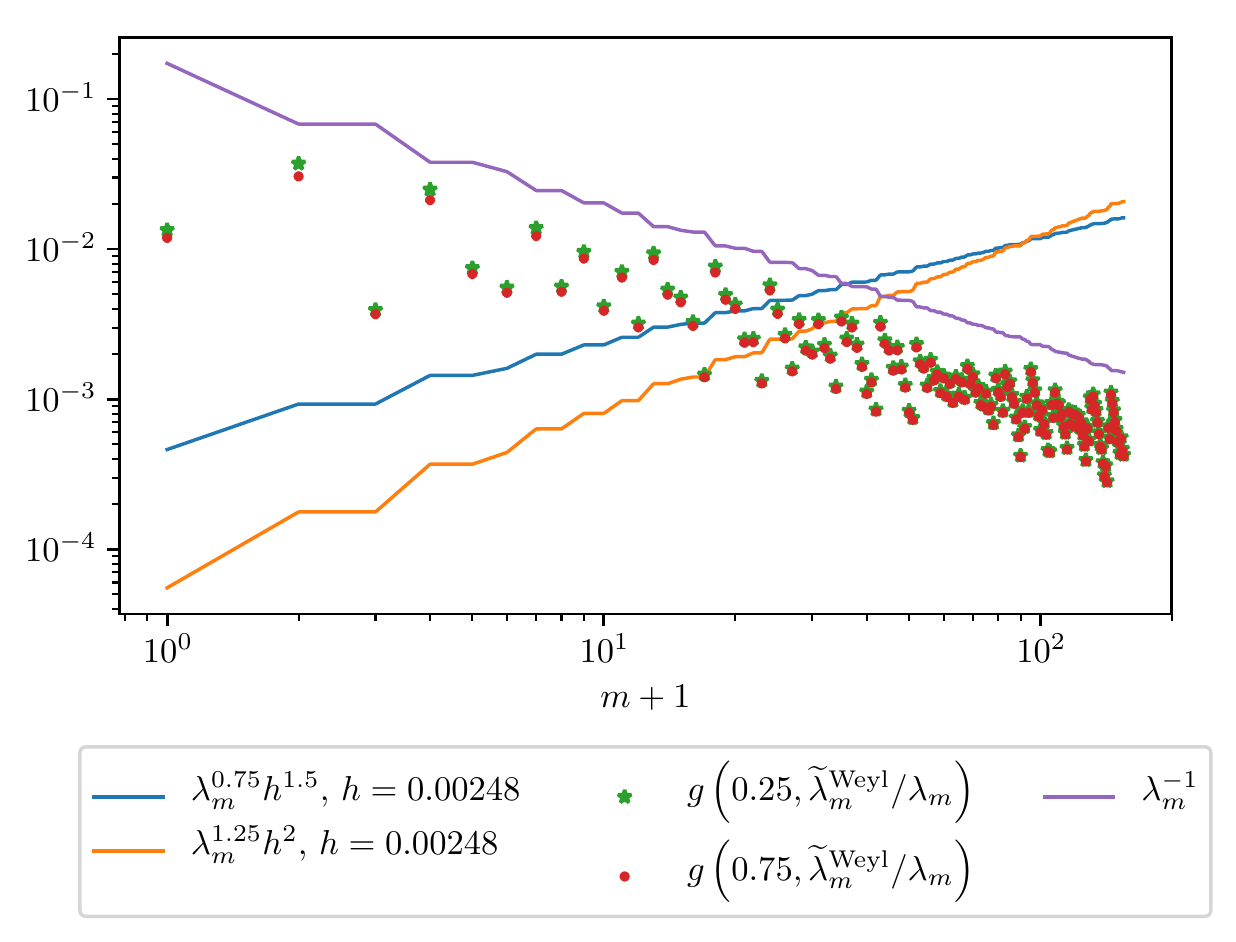}
  \caption{
    Condition \eqref{eq:eigenvalues} requires \(g\left(s, \tilde{\lambda}_{m}^{\text{Weyl}} / \lambda_{m}\right)\leq \lambda_{m}^{r+s}h^{2\min\{k,r+s\}}\).
    We display \(g\left(s, \tilde{\lambda}_{m}^{\text{Weyl}} / \lambda_{m}\right)\) and \(\lambda_{m}^{r+s}h^{2\min\{k,r+s\}}\) for \(r+s\in\{0.75, 1.25\}\) and \(k=1\).
    Here, \(h\) corresponds to a triangulation of the unit disc with about one million nodes.
    It can be observed that Weyl's conjecture gives a good approximation of the eigenvalues for the upper part of the spectrum.
  }
  \label{fig:bounds}
\end{figure}

\subsection{Finite Element Approximation of Lower Part of the Spectrum}
\label{sec:lower-part-spectrum}

The numerical example in the previous section shows that an alternative approach to Weyl's conjecture is required to approximate the smaller eigenvalues \(\lambda_{m}\), \(m=0,\dots,m_{0}\).
We propose to use the finite element method to approximate the lower part of the spectrum.
The solution of the linear system which arises in the fully discrete Galerkin problem entails the assembly of the mass matrix and the stiffness matrix for the Laplacian on the domain \(\Omega\) using finite elements which can also be used to compute approximate eigenvalues of the Laplacian.
As a matter of fact, since we are using a multigrid solver, coarser discretizations of the same problem are also readily available, meaning that we can compute eigenpairs \(\left(\tilde{\lambda}_{m,H}^{FE}, \vec{\Phi}_{m,H}\right)\) on the coarser grids:
\begin{align*}
  \mat{S}_{FE,H}\vec{\Phi}_{m,H} &=\tilde{\lambda}_{m,H}^{FE} \mat{M}_{FE,H}\vec{\Phi}_{m,H}
\end{align*}
where \(\mat{S}_{FE,H}\) and \(\mat{M}_{FE,H}\) are stiffness and mass matrix for a mesh size \(H\geq h\).

It is known that the approximate eigenvalues obtained using a finite element discretization satisfy (see e.g. \cite[Theorem 9.12]{Boffi2010_FiniteElementApproximationEigenvalueProblems} or \cite[Corollary 3.71]{ErnGuermond2004_TheoryPracticeFiniteElements})
\begin{align}
  \lambda_{m}\leq\tilde{\lambda}_{m,H}^{\text{FE}}\leq \lambda_{m} + C_{m} H^{2k}\lambda_{m}^{k+1}=\lambda_{m}\left(1 + C_{m} H^{2k}\lambda_{m}^{k}\right), \label{eq:femEigvals}
\end{align}
where \(C_{m}\) may grow as \(m\rightarrow\infty\).
In particular, if \(C_{m} H^{2k}\lambda_{m}^{k}\) is sufficiently small, then \(\log\left(1+C_{m}H^{2k}\lambda_{m}^{k}\right)\approx C_{m}H^{2k}\lambda_{m}^{k}\) is small, and, according to \Cref{lem:bound}, condition \eqref{eq:eigenvalues2} is satisfied and
\begin{align}
   g\left(s, \tilde{\lambda}_{m,H}^{FE} / \lambda_{m}\right)\leq C H^{4k}\lambda_{m}^{2k}.\label{eq:g_FE}
\end{align}
This means that \eqref{eq:eigenvalues} will be satisfied by choosing \(H\) small enough that \(H^{4k}\lambda_{m}^{2k}=\mathcal{O}\left(\lambda_{m}^{r+s}h^{2\min\{k,r+s\}}\right)\) for \(0\leq m\leq m_{0}\), or, equally well
\begin{align}
  H\leq C
  \begin{cases}
    h^{\frac{\min\{k,r+s\}}{1+r+s}\frac{1+2k}{2k}} & \text{if } 0\leq r+s\leq 2k, \\
    h^{1/2} & \text{if } r+s\geq 2k.
  \end{cases}\label{eq:H_value}
\end{align}

We illustrate these estimates by considering the case of the unit disc.
In \Cref{fig:boundsFE}, we show \(g\left(s, \tilde{\lambda}_{m,H}^{FE} / \lambda_{m}\right)\) for \(m=0,\dots,19\) obtained using several mesh sizes \(H\geq h\) and finite elements of order \(k=1\).
We also  plot the quantity appearing on the right-hand side of inequality \eqref{eq:eigenvalues}.
It can be seen that even very coarse discretizations lead to approximations that satisfy \eqref{eq:eigenvalues}.
Moreover, halving \(H\) decreases \(g\left(s,\tilde{\lambda}_{m,H}^{FE}\right)\) by a factor of 16, as suggested by \eqref{eq:g_FE}.
It can also be seen that \(g\left(s, \tilde{\lambda}_{m,H}^{FE} / \lambda_{m}\right)\) grows in proportion to \(\lambda_{m}^{2}\), as suggested by \eqref{eq:g_FE}.
The results confirm the expectation that the finite element approximation of the eigenvalues in the lower part of the spectrum provide a good choice for \(\tilde{\lambda}_{m}\).

\begin{figure}
  \centering
  \includegraphics{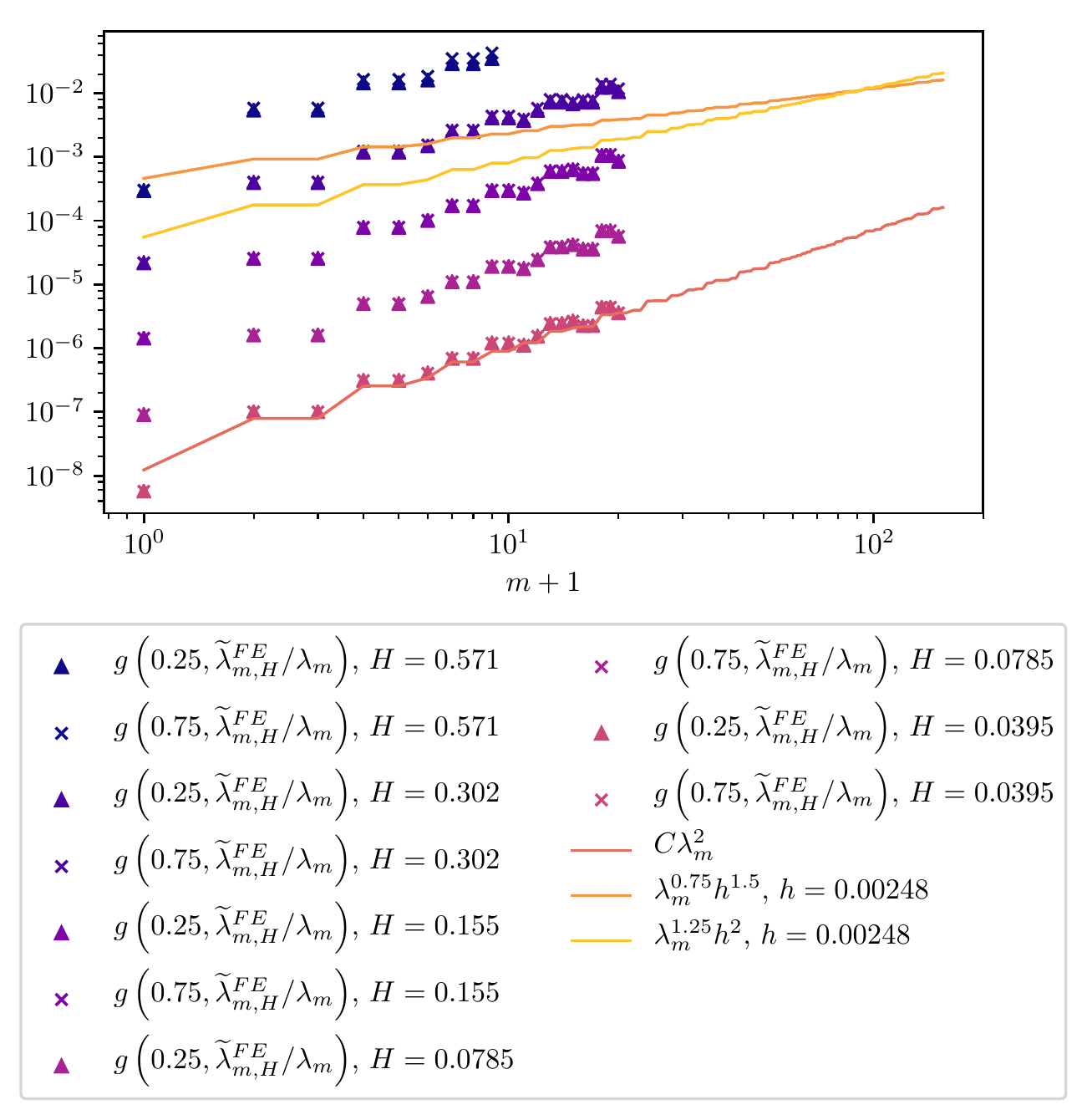}
  \caption{
    Condition \eqref{eq:eigenvalues} requires \(g\left(s, \tilde{\lambda}_{m,H}^{FE} / \lambda_{m}\right)\leq \lambda_{m}^{r+s}h^{2\min\{k,r+s\}}\).
    We display \(g\left(s, \tilde{\lambda}_{m,H}^{FE} / \lambda_{m}\right)\) for several choices of coarsened mesh sizes \(H\) against \(\lambda_{m}^{r+s}h^{2\min\{k,r+s\}}\) for \(r+s\in\{0.75, 1.25\}\) and \(k=1\).
    Here, \(h\) corresponds to a triangulation of the unit disc with about one million nodes.
    It can be observed that the finite element approximation of the eigenvalues provides a good choice for the approximation of the lower part of the spectrum.
    }
  \label{fig:boundsFE}
\end{figure}




\subsection{Size Reduction of the Approximation Space}
\label{sec:size-reduct-appr}

Suppose we have a candidate sequence of approximate eigenvalues \(\tilde{\lambda}_{m}\), \(m=0,\dots,M\).
These might coincide with the exact eigenvalues, if they are known, or could be obtained by a combination of finite element and Weyl approximations as described earlier.
In general, both the finite element approximations \(\tilde{\lambda}_{m}^{\text{FE}}\) and the approximations \(\tilde{\lambda}_{m}^{\text{Weyl}}\) from Weyl's law will be distinct.
This implies that the number of approximate eigenvalues is \(\tilde{M}=M\), and therefore the dimension of the approximation space \(\mathcal{V}_{h,M}\) would be \(\mathcal{N}=nM\).
However, it is unnecessary for the approximate eigenvalues to be in one to one correspondence with the true eigenvalues.
For instance, if two true eigenvalues are close together, then a single approximate eigenvalue should suffice for both.
This effectively reduces the number of approximate eigenvalues to \(\tilde{M}\leq M\).
Accordingly, we propose to minimise the number of distinct eigenvalues \(\left\{\tilde{\lambda}_{m}\right\}_{m=0}^{\tilde{M}-1}\) whilst still satisfying the bounds of \cref{eq:eigenvalues,eq:eigenvalues2}.
Employing \Cref{lem:bound}, we select a new set of approximations \(\hat{\lambda}_{m}\) by choosing \(\hat{\lambda}_{0}=\tilde{\lambda}_{0}\), and for \(m\geq1\),
\begin{align}
  \hat{\lambda}_{m}&=
                     \begin{cases}
                       \hat{\lambda}_{m-1} & \text{if } \abs{\log \frac{\hat{\lambda}_{m-1}}{\tilde{\lambda}_{m}}} \leq \kappa_{s} \min\left\{\left(\tilde{\lambda}_{m}^{\text{Weyl}}\right)^{(r+s)/2}h^{\min\{k,r+s\}},\sqrt{\frac{e}{2}\frac{\min\{s,1-s\}}{\max\{s,1-s\}}},1\right\}\\
                       \tilde{\lambda}_{m} &\text{otherwise}
                       \end{cases}. \label{eq:decimation}
\end{align}
Here, we have used the fact that the Weyl approximations \(\tilde{\lambda}_{m}^{\text{Weyl}}\) bound the exact eigenvalues from below.
We will see in the numerical examples in \Cref{sec:numerical-examples} (e.g. \Cref{fig:space_dimension_disc}) that this procedure results in \(\tilde{M} \ll M\).

To illustrate the method, we again consider the case where the domain is chosen to be the unit disc.
In \Cref{fig:boundsHybrid}, we display \(g\left(s, \hat{\lambda}_{m} / \lambda_{m}\right)\), where \(\hat{\lambda}_{m}\) is obtained by collapsing eigenvalue approximations obtained through finite element approximation and Weyl's law as described above.
We observe that \eqref{eq:eigenvalues} remains valid.

\begin{figure}
  \centering
  \includegraphics{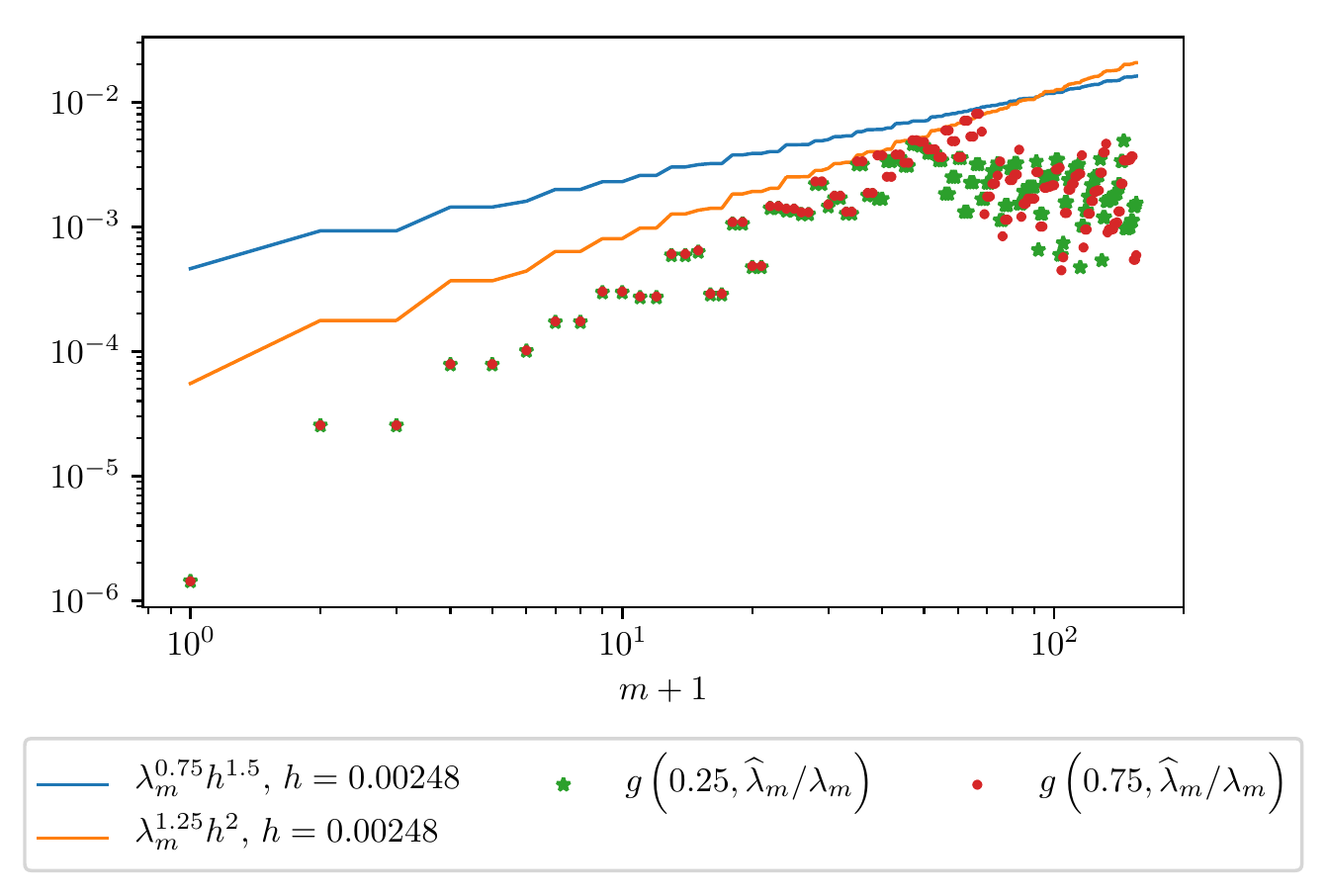}
  \caption{
    Condition \eqref{eq:eigenvalues} requires \(g\left(s, \hat{\lambda}_{m} / \lambda_{m}\right)\leq \lambda_{m}^{r+s}h^{2\min\{k,r+s\}}\).
    We display \(g\left(s, \hat{\lambda}_{m} / \lambda_{m}\right)\) and \(\lambda_{m}^{r+s}h^{2\min\{k,r+s\}}\) for \(r+s\in\{0.75, 1.25\}\) and \(k=1\).
    Here, \(h\) corresponds to a triangulation of the unit disc with about one million nodes.
    We observe that \eqref{eq:eigenvalues} is still satisfied.
    }
  \label{fig:boundsHybrid}
\end{figure}

\section{Solution of the Linear Algebraic System}
\label{sec:solution-linear-system}

Let \(\left\{\Phi_{i}\right\}_{i=1}^{n}\) denote the nodal basis functions of the finite element solution space \(V_{h}\), then the solution of the discretized fractional Poisson problem can be written as \(u_{h,M}\left(\vec{x}\right)=\sum_{i=1}^{n}d_{i}\Phi_{i}\left(\vec{x}\right)\).

Here, for ease of notation, we assume that the eigenvalue approximations \(\tilde{\lambda}_{m}\), \(m=0,\dots,\tilde{M}-1\), are all distinct.
Obviously, this can easily be achieved by relabelling the reduced set of eigenvalues resulting from the procedure described by \eqref{eq:decimation}.

The solution of the extruded problem \eqref{eq:varSubspace} can be written in the form
\begin{align*}
  U_{h,M}\left(\vec{x},y\right)=\sum_{m=0}^{\tilde{M}-1}\sum_{i=1}^{n}c_{i,m}\Phi_{i}\left(\vec{x}\right)\tilde{\psi}_{m}\left(y\right)\in\mathcal{V}_{h,M}
\end{align*}
with the coefficients \(\left(c_{i,m}\right)=\vec{U}_{h,M}\) obtained by solving the linear system
\begin{align}
  \left(\mat{M}_{FE}\otimes \mat{S}_{\sigma}+ \mat{S}_{FE}\otimes \mat{M}_{\sigma}\right)\vec{U}_{h,M}=\vec{F}_{h,M},\label{eq:linearSystem}
\end{align}
where
\begin{align*}
  \mat{M}_{FE} &= \left(\int_{\Omega}\Phi_{i}\Phi_{j}\right), &
  \mat{S}_{FE} &= \left(\int_{\Omega}\grad\Phi_{i}\grad\Phi_{j}\right), \\
  \mat{M}_{\sigma} &= \left(\int_{0}^{\infty}y^{\alpha}\tilde{\psi}_{m}\tilde{\psi}_{n}\right), &
  \mat{S}_{\sigma} &= \left(\int_{0}^{\infty}y^{\alpha}\tilde{\psi}_{m}'\tilde{\psi}_{n}'\right), \\
  \vec{F}_{h,M} &= \vec{f}_{h} \otimes \vec{1}_{\tilde{M}},&
  \vec{f}_{h}&= \left(d_{s} \pair{f_{h}}{\Phi_{i}}\right).
\end{align*}
Here, \(\vec{1}_{\tilde{M}}\) is the vector of ones of length \(\tilde{M}\).
The approximation to the solution of the fractional Poisson problem is then obtained by taking the trace of \(U_{h,M}\) on \(\Omega\):
\begin{align}
  u_{h,M}
  &=\operatorname{tr}_{\Omega}U_{h,M}
  = \sum_{i=1}^{n}\left(\sum_{m=0}^{\tilde{M}-1}c_{i,m}\right)\Phi_{i}\left(\vec{x}\right),\label{eq:discSol}
\end{align}
where we recall the normalisation \(\tilde{\psi}_{m}\left(0\right)=1\).
In matrix form, the trace operator is given by \(\mat{I}\otimes\vec{1}_{\tilde{M}}^{T}\in\mathbb{R}^{n\times \mathcal{N}}\), so that \(\vec{u}_{h,M}=\left[\mat{I}\otimes\vec{1}_{\tilde{M}}^{T}\right]\vec{U}_{h,M}\).

\Cref{eq:massSpec,eq:stiffnesSpec} show that both the spectral mass and stiffness matrices are symmetric and dense.
In order to compute the solution of \eqref{eq:linearSystem} efficiently, we consider the Cholesky factorisation of \(\mat{M}_{\sigma}\):
\begin{align*}
  \mat{M}_{\sigma}=\mat{L}\mat{L}^{T}
\end{align*}
where \(\mat{L}\) is lower triangular; and the eigenvalue decomposition of \(\mat{L}^{-1}\mat{S}_{\sigma}\mat{L}^{-T}\)
\begin{align*}
  \mat{S}_{\sigma}&=\mat{L}\mat{P}\mat{\Lambda}\mat{P}^{T}\mat{L}^{T}
\end{align*}
where \(\mat{\Lambda}\) is diagonal and \(\mat{P}\) is orthogonal.
Each factorisation can be computed in \(\mathcal{O}\left(\tilde{M}^{3}\right)\) operations.
These factorisations allow the matrix appearing in \eqref{eq:linearSystem} to be factorised as
\begin{align*}
  \mat{M}_{FE}\otimes \mat{S}_{\sigma}+ \mat{S}_{FE}\otimes \mat{M}_{\sigma}
  &= \left[ \mat{I}\otimes \left(\mat{L}\mat{P}\right)\right] \left[(\mat{M}_{FE}\otimes \mat{\Lambda} + \mat{S}_{FE}\otimes \mat{I}\right]\left[\mat{I}\otimes \left(\mat{L}\mat{P}\right)^{T}\right] .
\end{align*}
with the inverse given by
\begin{align*}
  \left(\mat{M}_{FE}\otimes \mat{S}_{\sigma}+ \mat{S}_{FE}\otimes \mat{M}_{\sigma}\right)^{-1}
  &= \left[\mat{I}\otimes \left(\mat{L}^{-T}\mat{P}\right)\right] \left[\mat{M}_{FE}\otimes \mat{\Lambda} + \mat{S}_{FE}\otimes \mat{I}\right]^{-1} \left[\mat{I}\otimes \left(\mat{P}^{T}\mat{L}^{-1}\right)\right].
\end{align*}
Using this form of the inverse to write down an explicit expression for the solution \(\vec{U}_{h,M}\) of \eqref{eq:linearSystem} and then inserting into \eqref{eq:discSol}, taking account of the right-hand side and applying the discrete trace operator gives
\begin{align}
  \vec{u}_{h,M}=&\left[\mat{I}\otimes \left(\mat{P}^{T}\mat{L}^{-1}\vec{1}_{\tilde{M}}\right)^{T}\right] \left[\mat{M}_{FE}\otimes \mat{\Lambda} + \mat{S}_{FE}\otimes \mat{I}\right]^{-1} \left[\vec{f}_{h}\otimes \left(\mat{P}^{T}\mat{L}^{-1}\vec{1}_{\tilde{M}}\right)\right] \nonumber \\
  =& \sum_{m=0}^{\tilde{M}-1} w_{m}^{2} \left[\mat{M}_{FE}\mat{\Lambda}_{mm} + \mat{S}_{FE}\right]^{-1} \vec{f}_{h}. \label{eq:linearSystemReduced}
\end{align}
Here, \(\vec{w}\) denotes the weight vector \(\vec{w}=\mat{P}^{T}\mat{L}^{-1}\vec{1}_{\tilde{M}}\in\mathbb{R}^{\tilde{M}}\), which can be computed in \(\mathcal{O}\left(\tilde{M}^{2}\right)\) operations and stored for reuse.
It remains to compute the action of the inverse \(\left[\mat{M}_{FE}\otimes \mat{\Lambda} + \mat{S}_{FE}\otimes \mat{I}\right]^{-1}\).
This is accomplished using a conjugate gradient solver with standard geometric multigrid preconditioner for the solution of the systems \(\mat{M}_{FE}\Lambda_{mm}+\mat{S}_{FE}\), \(m=0,\dots,\tilde{M}-1\), meaning that each system can be solved in \(\mathcal{O}\left(n\right)\) operations.

In summary, the setup of the multigrid solver, the prefactorisation of the matrices and the precomputation of \(\vec{w}\) will cost \(\mathcal{O}\left(n+\tilde{M}^{3}\right)\) operations, and each solve will cost \(\mathcal{O}\left(n\tilde{M}\right)=\mathcal{O}\left(\mathcal{N}\right)\) operations.
The parallelisation of the solution procedure can take advantage of the fact that each of the solves in \eqref{eq:linearSystemReduced} is independent.

\section{Numerical Examples}
\label{sec:numerical-examples}

\subsection{Piecewise Linear Finite Element Approximation on the Unit Disk}
\label{sec:unit-disk}

Consider the problem
\begin{align*}
  \left\{
  \begin{array}{rlrl}
    \left(-\Delta\right)^{s}u&=f && \text{in } \Omega=B(0,1)\subset \mathbb{R}^{2}\\
    u&=0 && \text{on }\partial\Omega,
  \end{array}\right.
\end{align*}
where \(f=\left(1-\abs{\vec{x}}^{2}\right)^{r-1/2}\in\tilde{H}^{r-\varepsilon}\left(\Omega\right)\), for all \(\varepsilon>0\).
We approximate the solution for \(s\in\{0.25,0.75\}\) and \(r\in\{0.5, 2\}\) using piecewise linear finite elements (i.e. \(k=1\)).

The true eigenvalues and eigenfunctions of the Laplacian
\begin{align*}
  \left\{
  \begin{array}{rlrl}
    -\Delta\phi_{k,\ell}&=\lambda_{k,\ell}\phi_{k,\ell} && \text{in } \Omega\\
    u&=0 && \text{on }\partial\Omega,
  \end{array}\right.
\end{align*}
are given by
\begin{align*}
  \phi_{k,0}&=\frac{1}{\sqrt{\pi}J_{1}\left(\alpha_{0,k}\right)}J_{0}\left(\alpha_{0,k}r\right), && k\geq 1, \\
  \phi_{k,\ell}&=\frac{\sqrt{2}}{\sqrt{\pi}J_{\ell+1}\left(\alpha_{\ell,k}\right)} J_{\ell}\left(\alpha_{\ell,k}r\right) \cos\left(\ell\theta\right), && k\geq 1, \ell\geq 1,\\
  \phi_{k,-\ell}&=\frac{\sqrt{2}}{\sqrt{\pi}J_{\ell+1}\left(\alpha_{\ell,k}\right)} J_{\ell}\left(\alpha_{\ell,k}r\right) \sin\left(\ell\theta\right),&& k\geq 1, \ell\geq 1, \\
  \lambda_{k,\ell}&=\lambda_{k,-\ell}=\alpha_{\ell,k}^{2},
\end{align*}
where \(J_{\ell}\) are the Bessel functions of the first kind and \(\alpha_{\ell,k}\) are the zeros of \(J_{\ell}\).
Although the true eigenvalues are known for this case, we do not use this information in the definition of the solution space \(\mathcal{V}_{h,M}\).
Instead, we use the approximations obtained via finite elements and Weyl's law detailed above.
In order to assess the overall accuracy, we evaluate the error in the approximation using the expression:
\begin{align*}
  \norm{U-U_{h,M}}_{\mathcal{H}^{1}_{\alpha}}^{2}
  &= \norm{U}_{\mathcal{H}^{1}_{\alpha}}^{2} - 2\pair{U}{U_{h,M}}_{\mathcal{H}^{1}_{\alpha}} + \norm{U_{h,M}}_{\mathcal{H}^{1}_{\alpha}}^{2} \\
  &= \norm{U}_{\mathcal{H}^{1}_{\alpha}}^{2} - d_{s}\pair{f}{u_{h,M}}.
\end{align*}
Then, expanding the data \(f\) as a Bessel series
\begin{align*}
  f&= \sum_{k,\ell} f_{k,\ell}\phi_{k,\ell}, \quad \text{where} \quad f_{k,\ell}=\ip{f}{\phi_{k,\ell}}_{L^{2}} = \delta_{\ell,0}2^{r+1/2}\sqrt{\pi}\Gamma\left(r+1/2\right) \frac{J_{r+1/2}\left(\alpha_{0,k}\right)}{\alpha_{0,k}^{r+1/2}J_{1}\left(\alpha_{0,k}\right)},
\end{align*}
we obtain
\begin{align*}
  \norm{U-U_{h,M}}_{\mathcal{H}^{1}_{\alpha}}^{2}
  &= d_{s}\sum_{k,\ell} f_{k,\ell}^{2}\lambda_{k,\ell}^{-s} - d_{s}\pair{f}{u_{h,M}}\\
  &= d_{s} \left\{2^{2r+1}\pi\Gamma\left(r+1/2\right)^{2} \sum_{k=1}^{\infty}  \left(\frac{J_{r+1/2}\left(\alpha_{0,k}\right)}{\alpha_{0,k}^{s+r+1/2}J_{1}\left(\alpha_{0,k}\right)}\right)^{2} - \pair{f}{u_{h,M}}\right\}.
\end{align*}
In practice, we truncate the summation but keep sufficiently many terms that the error from the truncation is negligible in comparison with the error in the Galerkin scheme.

In \Cref{fig:error_h_disc}, we plot the \(\mathcal{H}^{1}_{\alpha}\)-error with respect to the mesh size \(h\).
It is observed that the error decays as predicted by \Cref{thm:errorBound}.
In \Cref{fig:error_N_disc}, we again show the \(\mathcal{H}^{1}_{\alpha}\)-error, this time with respect to the total number of degrees of freedom \(\mathcal{N}\).
Letting \(n=\dim V_{h}\), we have
\begin{align*}
  \norm{U-U_{h,M}}_{\mathcal{H}^{1}_{\alpha}}
  &\leq C\abs{f}_{\tilde{H}^{r}}h^{\min\{k,r+s\}}\sqrt{\abs{\log h}}
  \leq C\abs{f}_{\tilde{H}^{r}}n^{-\min\{k,r+s\}/d}\sqrt{\log n}.
\end{align*}
Suppose that the number of distinct eigenvalue approximations behaves like \(\tilde{M}=\mathcal{O}\left(\log^{p} n\right)\) for some \(p\geq 0\).
Then the total number of degrees of freedom is \(\mathcal{N}=n\tilde{M}=\mathcal{O}\left(n \log^{p} n\right)\).
That is to say, the total number of degrees of freedom scales like the number of degrees of freedom in the usual, integer order case, apart from the logarithmic factor.
In this case, we would obtain quasi-optimal \(\mathcal{H}^{1}_{\alpha}\)-error convergence:
\begin{align}
  \norm{U-U_{h,M}}_{\mathcal{H}^{1}_{\alpha}} \leq C \abs{f}_{\tilde{H}^{r}}\mathcal{N}^{-\min\{k,r+s\}/d} \log^{q}\mathcal{N} \label{eq:convN}
\end{align}
for some \(q\geq 0\) up to a logarithmic factor.
It is observed in \Cref{fig:error_N_disc} that this behaviour is observed in practice.
In fact, \(\tilde{M}=\mathcal{O}\left(\log^{p} n\right)\) for some exponent \(p\geq1\), as can be seen from \Cref{fig:space_dimension_disc}, and the method displays quasi-optimal complexity as observed in \Cref{fig:error_N_disc}.

In order to assess the efficiency of the solver for the linear algebraic system, in \Cref{fig:iterations_disc} we show the average number of iterations of multigrid preconditioned conjugate gradient necessary to solve the systems \(\mat{M}_{FE}+\mat{\Lambda}_{mm}\mat{S}_{FE}\), \(m=0,\dots,\tilde{M}-1\).
Observe that roughly 10 iterations are required for convergence independently of problem size, regularity of the data or fractional order.

Finally, we display timing results for setup and solution in \Cref{fig:timings_disc}.
It can be seen that both the setup time for the solver (which includes the approximation of eigenvalues) and solution of the resulting linear system of equations scale as \(\mathcal{O}\left(n\right)\), where \(n\) is the number of degrees of freedom in the finite element discretization.

\begin{figure}
  \centering
  \includegraphics{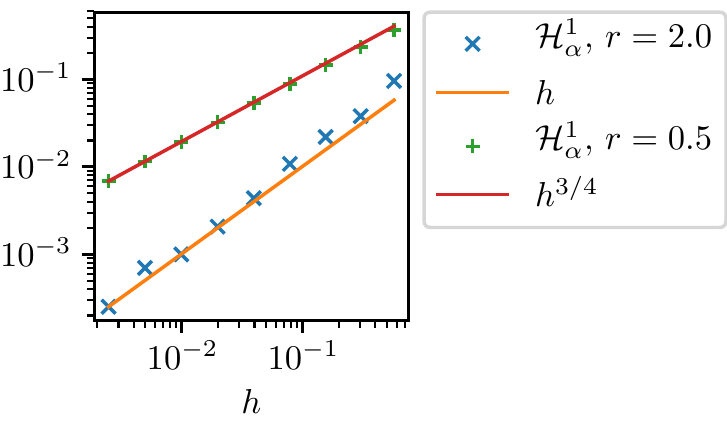}
  \includegraphics{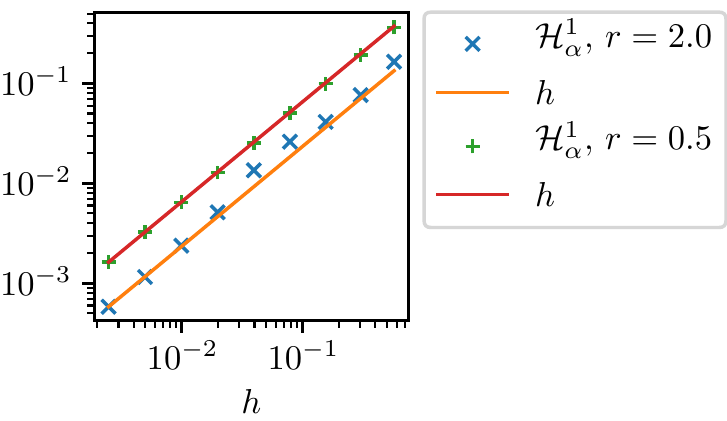}
  \caption{
    \(\mathcal{H}^{1}_{\alpha}\)-error for the fractional Poisson problem with right-hand side \(f=\left(1-\abs{\vec{x}}^{2}\right)^{r-1/2}\) on the unit disc with piecewise linear finite elements (\(k=1\)).
    \(s=0.25\) on the left, \(s=0.75\) on the right.
    The error decay of \(h^{\min\{k,r+s\}}\) predicted by \Cref{thm:errorBound} is observed.
  }
  \label{fig:error_h_disc}
\end{figure}

\begin{figure}
  \centering
  \includegraphics{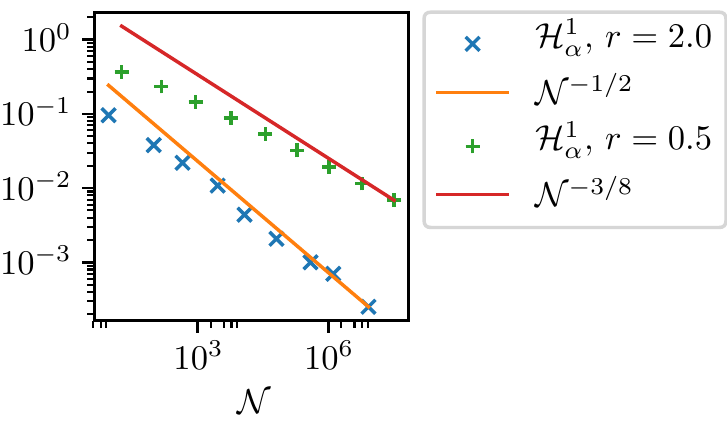}
  \includegraphics{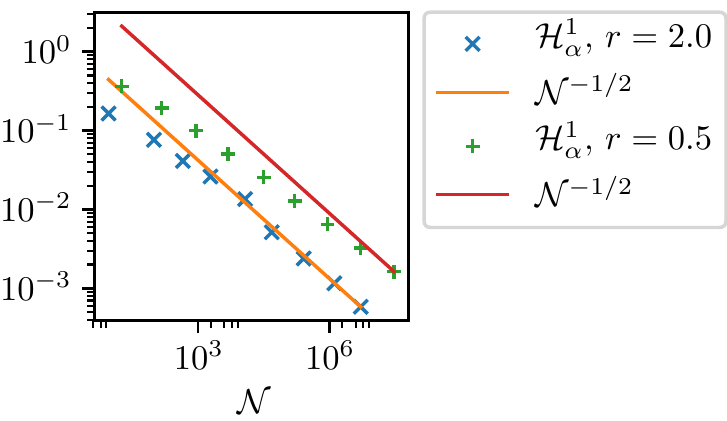}
  \caption{
    \(\mathcal{H}^{1}_{\alpha}\)-error with respect to the total number of degrees of freedom \(\mathcal{N}\) on the unit disc with piecewise linear finite elements (\(k=1\)).
    \(s=0.25\) on the left, \(s=0.75\) on the right.
    Quasi-optimal convergence is obtained.
    (Compare with the optimal order given in \eqref{eq:convN}.)
  }
  \label{fig:error_N_disc}
\end{figure}

\begin{figure}
  \centering
  \includegraphics{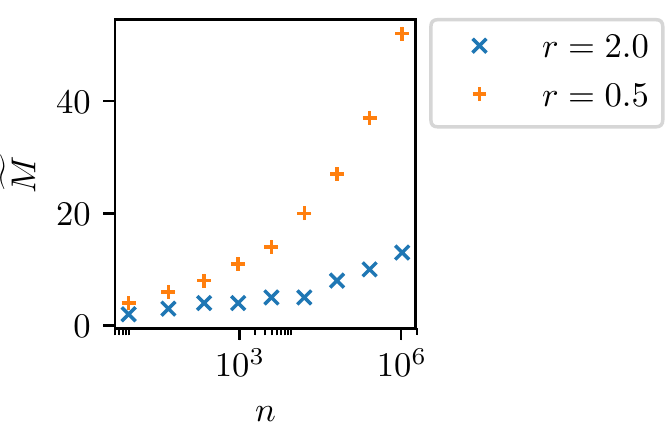}
  \includegraphics{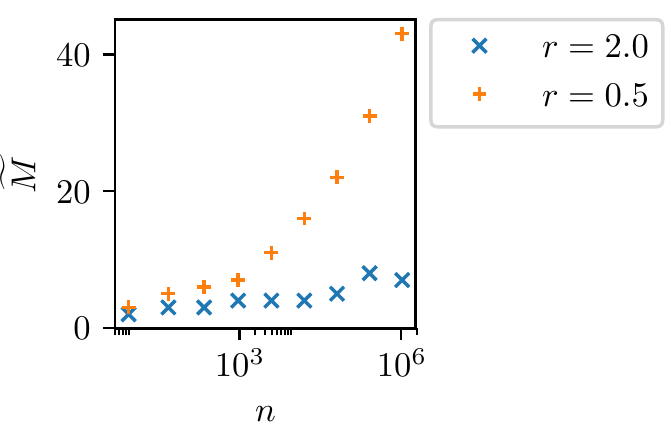}
  \caption{
    Number of distinct eigenvalues \(\tilde{M}\).
    \(s=0.25\) on the left, \(s=0.75\) on the right.
    The number of distinct eigenvalue approximations \(\tilde{M}\) grows like \(C\log^{p} n\) for some \(p\geq 1\).
  }
  \label{fig:space_dimension_disc}
\end{figure}

\begin{figure}
  \centering
  \includegraphics{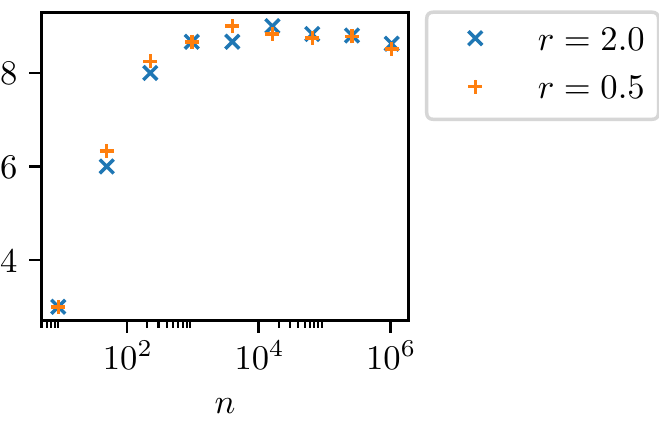}
  \includegraphics{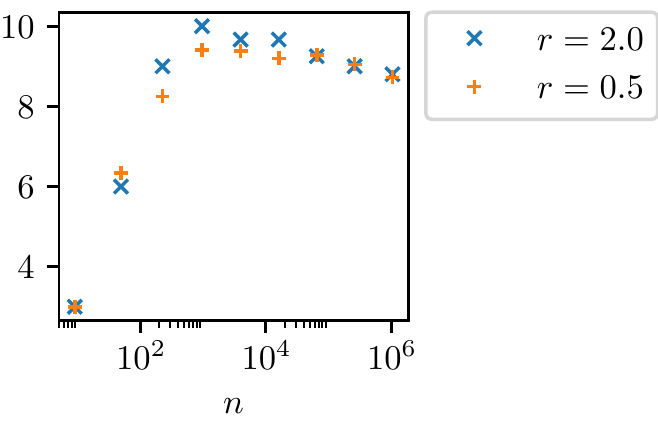}
  \caption{
    Average number of multigrid preconditioned conjugate gradient iterations.
    \(s=0.25\) on the left, \(s=0.75\) on the right.
    We observe that 10 iterations are sufficient for convergence, independent of problem size, right-hand side regularity and fractional order.
  }
  \label{fig:iterations_disc}
\end{figure}

\begin{figure}
  \centering
  \includegraphics{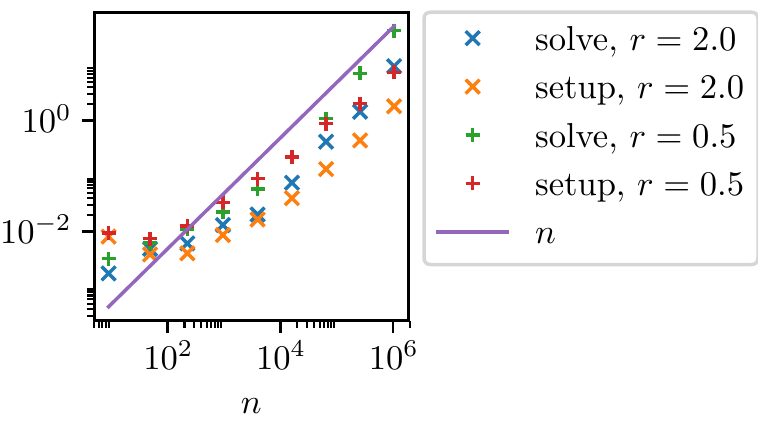}
  \includegraphics{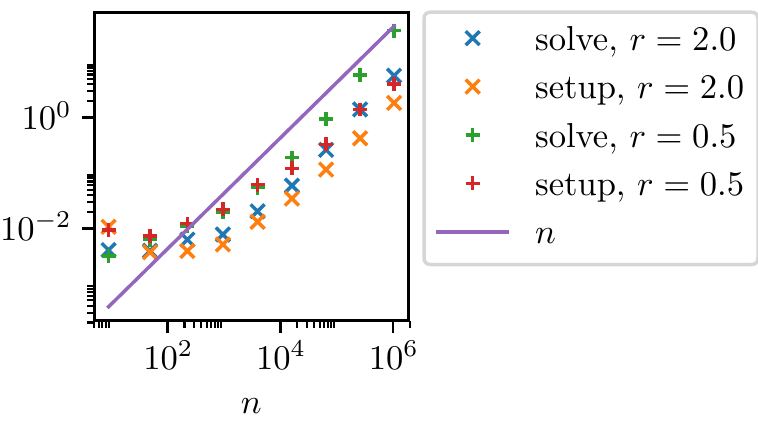}
  \caption{
    Timings of setup and solution.
    \(s=0.25\) on the left, \(s=0.75\) on the right.
    It can be seen that both setup of the solver, which includes the approximation of eigenvalues, and solution of the resulting linear system of equations scale roughly as \(\mathcal{O}\left(n\right)\), where \(n\) is the number of degrees of freedom of the finite element discretization.
  }
  \label{fig:timings_disc}
\end{figure}

\FloatBarrier{}

\subsection{Piecewise Quadratic Finite Element Approximation on the Unit Square}
\label{sec:unit-square}

Consider now the approximation of the fractional Poisson problem on the unit square:
\begin{align*}
  \left\{
  \begin{array}{rlrl}
    \left(-\Delta\right)^{s}u&=f && \text{in } \Omega=[0,1]^{2}\\
    u&=0 && \text{on }\partial\Omega,
  \end{array}\right.
\end{align*}
where \(f\left(\vec{x}\right)=\left[x_{1}x_{2}(1-x_{1})(1-x_{2})\right]^{r-1/2}\).
This time we use piecewise quadratic finite elements in order to demonstrate the flexibility of the approach.
The exact eigenvalues and eigenfunctions are known and can be used to compute the \(\mathcal{H}^{1}_{\alpha}\)-error using the expression
\begin{align*}
  &\norm{U-U_{h,M}}_{\mathcal{H}^{1}_{\alpha}}^{2}\\
  =& d_{s}\left\{4\pi^{2} \Gamma\left(r+1/2\right)^{4} \right.\\
  &\qquad \times \sum_{p,q=0}^{\infty} \frac{1}{\pi^{4r+2s}} \frac{1}{\left(2p+1\right)^{2r} \left(2q+1\right)^{2r}\left[(2p+1)^{2}+(2q+1)^{2}\right]^{s}} J_{r}\left(\pi(p+1/2)\right)^{2} J_{r}\left(\pi(q+1/2)\right)^{2} \\
  &\qquad \left.- \pair{f}{u_{h,M}}\right\}.
\end{align*}
As before, we wish to assess the convergence rate of our procedure when the eigenvalues are approximated using Weyl's law and finite element approximations for the definition of the solution space.

In \Cref{fig:error_h_square,fig:error_N_square}, we show the \(\mathcal{H}^{1}_{\alpha}\)-error versus \(h\) and \(\mathcal{N}\) respectively.
It can be seen that the error bound of \Cref{thm:errorBound} is satisfied, and that quasi-optimal convergence with respect to \(\mathcal{N}\) is again obtained.

\begin{figure}
  \centering
  \includegraphics{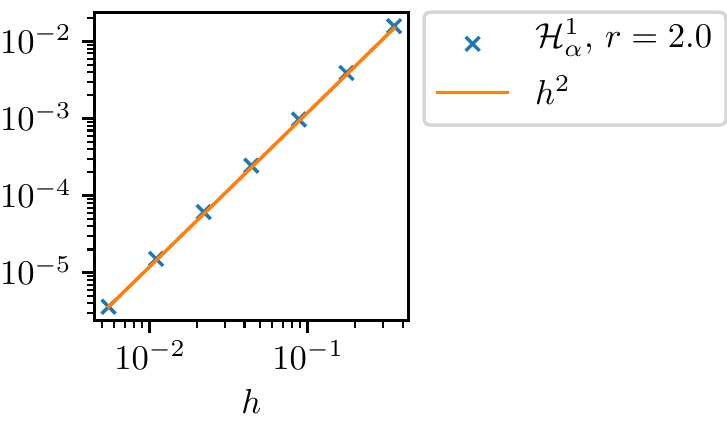}
  \includegraphics{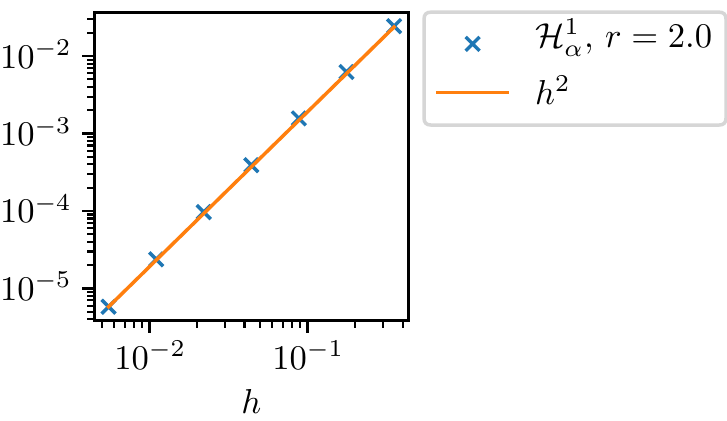}
  \caption{
    \(\mathcal{H}^{1}_{\alpha}\)-error for the fractional Poisson problem with right-hand side \(f=\left[x_{1}x_{2}(1-x_{1})(1-x_{2})\right]^{r-1/2}\) on the unit square with piecewise quadratic finite elements (\(k=2\)).
    \(s=0.25\) on the left, \(s=0.75\) on the right.
    The error decay of \(h^{\min\{k,r+s\}}\) predicted by \Cref{thm:errorBound} is observed.
  }
  \label{fig:error_h_square}
\end{figure}

\begin{figure}
  \centering
  \includegraphics{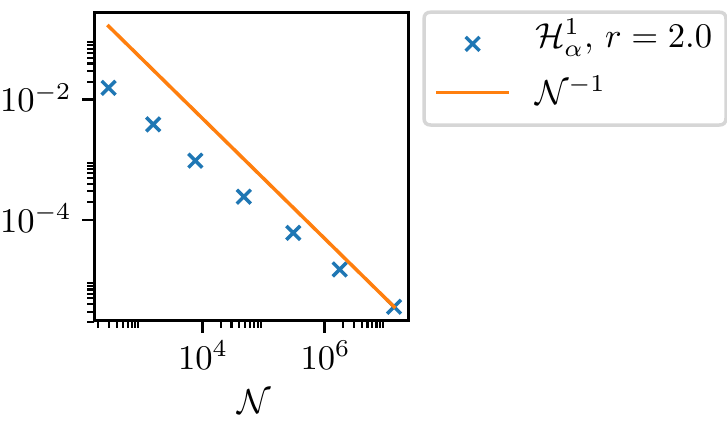}
  \includegraphics{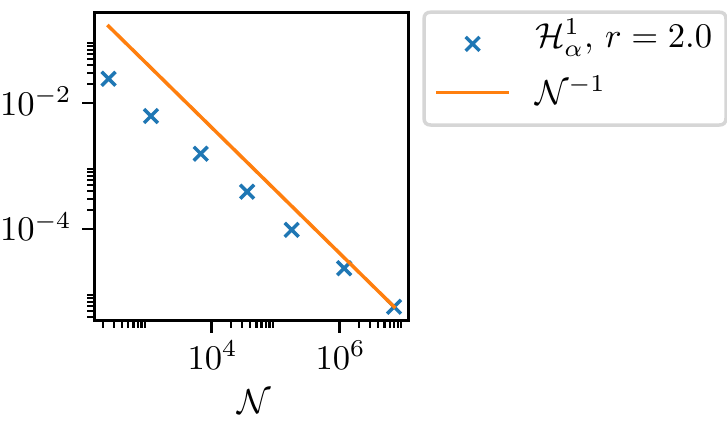}
  \caption{
    \(\mathcal{H}^{1}_{\alpha}\)-error with respect to the total number of degrees of freedom \(\mathcal{N}\) on the unit square with piecewise quadratic finite elements (\(k=2\)).
    \(s=0.25\) on the left, \(s=0.75\) on the right.
    Quasi-optimal convergence is obtained.
    (Compare with the optimal order given in \eqref{eq:convN}.)
  }
  \label{fig:error_N_square}
\end{figure}

\FloatBarrier{}

\subsection{Piecewise Linear Finite Element Approximation on the Unit Cube}
\label{sec:unit-cube}

Finally, consider a fractional Poisson problem in three dimensions on the unit cube:
\begin{align*}
  \left\{
  \begin{array}{rlrl}
    \left(-\Delta\right)^{s}u&=f && \text{in } \Omega=[0,1]^{3}\\
    u&=0 && \text{on }\partial\Omega,
  \end{array}\right.
\end{align*}
We use piecewise linear finite element approximation and compute the true \(\mathcal{H}^{1}_{\alpha}\)-error in similar fashion as before.
Here, \(f\left(\vec{x}\right)=\left[x_{1}x_{2}x_{3}(1-x_{1})(1-x_{2})(1-x_{3})\right]^{r-1/2}\).
In \Cref{fig:error_h_cube,fig:error_N_cube}, we plot the \(\mathcal{H}^{1}_{\alpha}\)-error versus \(h\) and \(\mathcal{N}\) respectively.
It can be seen that the error bound of \Cref{thm:errorBound} is satisfied, and that quasi-optimal convergence with respect to \(\mathcal{N}\) is again observed.

\begin{figure}
  \centering
  \includegraphics{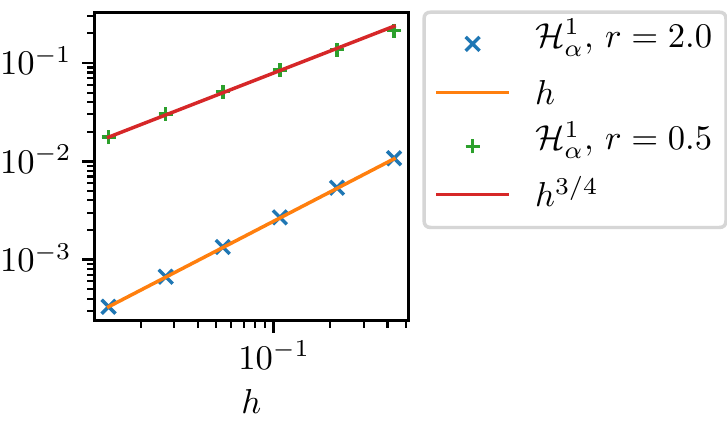}
  \includegraphics{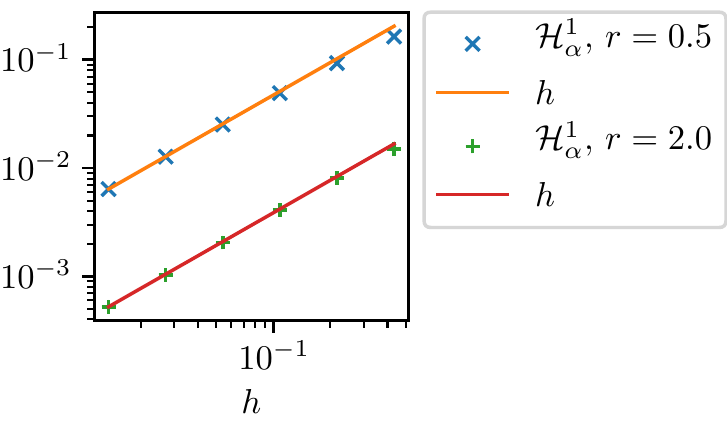}
  \caption{
    \(\mathcal{H}^{1}_{\alpha}\)-error for the fractional Poisson problem with right-hand side \(f=\left[x_{1}x_{2}x_{3}(1-x_{1})(1-x_{2})(1-x_{3})\right]^{r-1/2}\) on the unit cube with piecewise linear finite elements (\(k=1\)).
    \(s=0.25\) on the left, \(s=0.75\) on the right.
    The error decay of \(h^{\min\{k,r+s\}}\) predicted by \Cref{thm:errorBound} is observed.
  }
  \label{fig:error_h_cube}
\end{figure}

\begin{figure}
  \centering
  \includegraphics{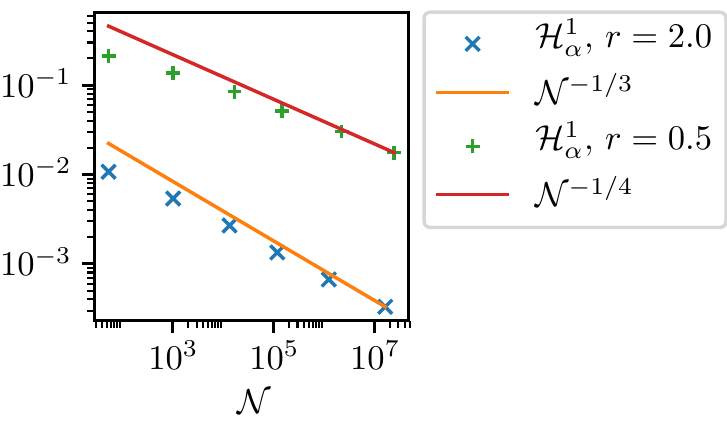}
  \includegraphics{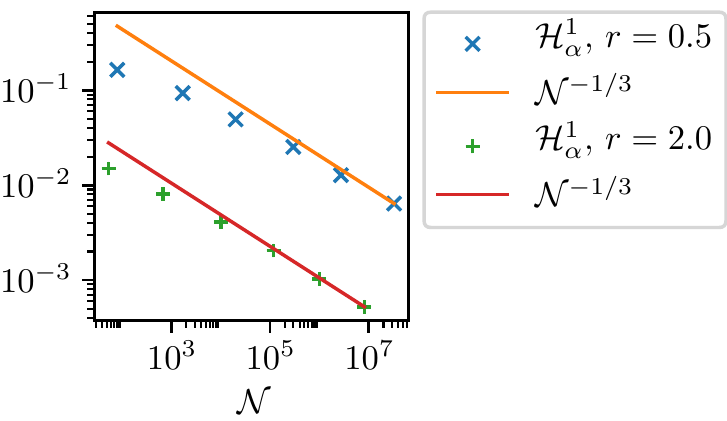}
  \caption{
    \(\mathcal{H}^{1}_{\alpha}\)-error with respect to the total number of degrees of freedom \(\mathcal{N}\) on the unit cube with piecewise linear finite elements (\(k=1\)).
    \(s=0.25\) on the left, \(s=0.75\) on the right.
    Quasi-optimal convergence is obtained.
    (Compare with the optimal order given in \eqref{eq:convN}.)
  }
  \label{fig:error_N_cube}
\end{figure}

\FloatBarrier{}

\section{Conclusion}
\label{sec:conclusion}

A numerical scheme is presented for approximating fractional order Poisson problems in two and three dimensions.
The scheme is based on reformulating the original problem posed over \(\Omega\) on the extruded domain \(\mathcal{C}=\Omega\times[0,\infty)\) following \cite{CaffarelliSilvestre2007_ExtensionProblemRelatedToFractionalLaplacian}.
The resulting degenerate elliptic \emph{integer} order PDE is approximated using a hybrid FEM-spectral scheme.
Finite elements are used on \(\Omega\), whilst an appropriate spectral method is used in the extruded direction.
The spectral part of the scheme requires suitable approximations of the true eigenvalues of the usual Laplacian over \(\Omega\).
We derive an a priori error estimate which takes account of the error arising from the approximation of the true eigenvalues, and present a strategy for choosing suitable approximations of the eigenvalues based on Weyl's law and finite element discretizations of the eigenvalue problem.
The resulting system of linear algebraic equations is decomposed into blocks which are solved using standard iterative solvers such as multigrid and conjugate gradient.
Numerical examples in two and three dimensions show that the approach is quasi-optimal in terms of complexity.

\printbibliography

\end{document}